\newtheorem{theorem}{Theorem}[section]
\newtheorem{definition}[theorem]{Definition}
\newtheorem{lemma}[theorem]{Lemma}
\newtheorem{corollary}[theorem]{Corollary}
\newtheorem{proposition}[theorem]{Proposition}
\theoremstyle{definition}
\newtheorem{example}[theorem]{Example}
\newtheorem{remark}[theorem]{Remark}
\begin{document}
\setcounter{tocdepth}{1}

\title[]{The strong Nakano property in Banach lattices}

\author[Y. Azouzi]{Youssef Azouzi}
\address{Research Laboratory of Algebra, Topology, Arithmetic, and Order\\
Department of Mathematics\\
Faculty of Mathematical, Physical and Natural Sciences of Tunis\\
Tunis-El Manar University, 2092-El Manar, Tunisia}
\email{youssef.azouzi@ipest.rnu.tn}

\author[A. Ben Rjeb]{Asma~Ben Rjeb}
\address{Preparatory institute for engineering studies Nabeul\\
Research Laboratory of Algebra, Topology, Arithmetic, and Order\\
Department of Mathematics\\
Faculty of Mathematical, Physical and Natural Sciences of Tunis\\
Tunis-El Manar University, 2092-El Manar, Tunisia}
\email{asmabenrjab1992@gmail.com }

\author[P. Tradacete]{Pedro~Tradacete}
\address{Instituto de Ciencias Matem\'aticas (CSIC-UAM-UC3M-UCM)\\
Consejo Superior de Investigaciones Cient\'ificas\\
C/ Nicol\'as Cabrera, 13--15, Campus de Cantoblanco UAM\\
28049 Madrid, Spain.}
\email{pedro.tradacete@icmat.es}

\begin{abstract}
We study the strong Nakano property in Banach lattices with a special focus on free Banach lattices. We show that for every finite dimensional Banach space $E$, the free Banach lattice $FBL[E]$ has the strong Nakano property with a constant independent of the dimension. It is also shown that if $FBL[E]$ has the strong Nakano property, then $E$ cannot contain subspaces isomorphic to neither $c_0$ nor $L_1$.
\end{abstract}

\subjclass[2020]{46B42}

\keywords{Banach lattice; Strong Nakano propery; Free Banach lattice}

\maketitle

\section{Introduction and preliminaries}

In the Banach lattice setting there is a number of relations between norm boundedness of a set and existence of suprema or upper bounds with specific properties. Besides the classical notions of order (or Dedekind) completeness, one should recall that a Banach lattice $X$ is said to have the Fatou property if for every upwards directed subset $A \subset X_+$ with supremum $b$, then 
$$
\| b\| = \sup \{\|a\|:a\in A\}.
$$ 
A natural generalization of this notion was considered by H. Nakano \cite{Nakano, Nakano2} (and actually coined in \cite[Theorem 3]{Wickstead}): a Banach lattice $X$ has the Nakano property if for every upwards directed order bounded set $A\subset X_+$ one has 
$$
\inf\{\|b\|:b \text{ an upper bound for A}\} = \sup \{\|a\|:a\in A\}.
$$
The relation between these properties was clarified in \cite[Theorem 3]{Wickstead}: a Banach lattice $X$ has the Nakano property if and only if its Dedekind completion $\hat X$ has the Fatou property.

In this note we will study a stronger version of the Nakano property. Namely, a Banach lattice $X$ is said to have the strong Nakano property if for every norm-bounded upwards directed subset $A \subset X_+$ there is an upper bound $b\in X_+$ such that 
$$
\| b\| = \sup \{\|a\|:a\in A\}.
$$
This notion was introduced in \cite{ART}, where it was proved that the free Banach lattice $FBL(A)$ generated by any set $A$ has the strong Nakano property, thus answering a question from \cite{dePW}.

Naturally, one can also consider particular analogues of the strong Nakano property: we say that $X$ has the strong $\sigma$-Nakano property when every norm-bounded increasing \emph{sequence} $(x_n)\subset X_+$ has an upper bound $b\in X_+$ such that $\|b\|=\sup_n\|x_n\|$; also, for $\lambda\geq1$, we say that $X$ has the $\lambda$-strong Nakano property when every norm-bounded upwards directed set $A\subset X_+$ has an upper bound $b\in X$ with 
$$
\|b\|\leq \lambda \sup\{\|a\|:a\in A\};
$$
similarly, for $\lambda\geq1$, we will say that $X$ has the $(\lambda_+)$-strong Nakano property when every norm-bounded upwards directed set $A\subset X_+$ has for every $\lambda'>\lambda$ an upper bound $b\in X_+$ with 
$$
\|b\| \leq \lambda' \sup\{\|a\|:a\in A\}.
$$

In this paper we start with some basic properties of Banach lattices with the strong Nakano property and then focus our analysis on the case of free Banach lattices. To motivate this notion, let us briefly recall the construction of the free Banach lattice generated by a Banach space: Given a Banach space $E$, the \textit{free Banach lattice generated by $E$} will be a Banach lattice $FBL[E]$ together with a linear isometric embedding $\delta_E:E\rightarrow FBL[E]$, such that for every Banach lattice $X$ and every linear and bounded operator $T:E\rightarrow X$ there is a unique lattice homomorphism $\hat T$ making the following diagram commutative:
$$
\xymatrix{FBL[E]\ar@{-->}[rd]^{\hat{T}}&\\
     E\ar[r]^{T}\ar^{\delta_E}[u]& X}
$$
Moreover, $\|\hat{T}\|=\|T\|$.

The existence of $FBL[E]$, for every Banach space $E$, was proved in \cite{ART} via an explicit construction: For a positively homogeneous function $f:E^*\rightarrow \mathbb R$ consider the expression
\begin{equation}\label{eq: norm fbl}
	\|f\|_{FBL[E]}=\sup\Big\{\sum_{i=1}^m |f(x_i^*)|:m\in \mathbb N, (x_i^*)_{i=1}^m\subset E^*, \sup_{x\in B_E} \sum_{i=1}^m |x_i^*(x)|\leq 1\Big\},
\end{equation}
and define $H_1[E]$ to be the space of positively homogeneous functions $f:E^*\rightarrow \mathbb R$ such that$\|f\|_{FBL[E]}<\infty$, endowed with the pointwise order and lattice operations. It is easy to see that $H_1[E]$ is a Banach lattice with the norm $\|\cdot\|_{FBL[E]}$. 

Observe that for each $x\in E$, the functions $\delta_x:E^*\rightarrow \mathbb R$ given by $\delta_x(x^*)=x^*(x)$ for $x^*\in E^*$, can be used to define a linear isometry $x\mapsto \delta_E(x)=\delta_x$ from $E$ into $H_1[E]$. \cite[Theorem 2.5]{ART} shows that the closed sublattice generated by $\delta_E(E)$ in $H_1[E]$, provides an explicit description of $FBL[E]$, representing this as a space of functions on $E^*$. 

It is worth noting that the above construction defines a functor from the category of Banach spaces with bounded linear operators into that of Banach lattices with lattice homomorphisms, and has been the object of intense research in the last few years (see for instance \cite{AMRT, AMR2, DMRR2, GST, dHT, JLTTT} for different developments including relations with projectivity, duality, complex scalars and $p$-convex Banach lattices, among others). Certain properties of a Banach space can be identified with a corresponding analogue in Banach lattices via this functor (see in particular \cite[Section 10]{OTTT} for the basis of a dictionary between Banach space and Banach lattice properties).

The paper is organized as follows: In the next section, we begin with a systematic study of the strong Nakano property showing its relations to other classical properties of Banach lattices (such as existence of strong units, monotonically complete norms, or order continuity). We also provide some results about stability of the strong Nakano property under taking spaces of regular operators or sublattices and some facts about projections. In addition, it is observed that projective Banach lattices have the strong Nakano property. This is a consequence of the fact that the free Banach lattice $FBL(A)=FBL[\ell_1(A)]$ has the strong Nakano property for every set $A$ \cite[Theorem 4.11]{ART}. In particular, the latter implies that free Banach lattices generated by a finite dimensional space $E$ have the strong Nakano property with a constant depending on the Banach-Mazur distance between $E$ and $\ell_1(dim(E))$. This motivates the question on the optimality of this constant. In Section \ref{s:fbl}, using ideas from the local theory of Banach spaces about estimating summing norms of operators with few vectors, we will show that there is a universal constant $\lambda_{fin}$ such that for every finite-dimensional Banach space $E$, the space $FBL[E]$ has the $\lambda_{fin}$-strong Nakano property. We also show that Banach spaces $E$ for which $FBL[E]$ has the strong Nakano property cannot contain subspaces isomorphic to $L_1$ nor $c_0$. Finally, in Section \ref{s:fblpell1}, we provide the details to see that the free $p$-convex Banach lattice generated by $\ell_1(A)$, $FBL^{(p)}[\ell_1(A)]$ always has the strong Nakano property for $p\geq1$.

For background on Banach lattices we refer the reader to the monographs \cite{AB,LT2,M-N} and for the specifics of free Banach lattices to \cite{ART} and \cite{OTTT}.

\section{Basic facts}

It is clear that every Banach lattice with the strong Nakano property, has in particular the Nakano property, and this in turn implies the Fatou property. A simple example of a Banach lattice with the Fatou property but not the Nakano property is exhibited in \cite[Example 2]{Wickstead}. On the other hand, it is easy to check that $c_0$ has the Nakano, but not the $\lambda$-strong Nakano property for any $\lambda\geq1$ (this can be witnessed by the summing basis). In fact, we will see next that the only AM-spaces with the strong Nakano property are $C(K)$ spaces. Recall that an AM-space is a Banach lattice whose norm satisfies 
$$
\|x \vee y \|=\max\{\|x\|,\|y\|\},
$$
for every $x,y$ in $X_+$. A classical result due to S. Kakutani states that AM-spaces are always lattice isometric to a closed sublattice of some $C(K)$, whereas in the case they have strong unit (an element $e$ satisfying $\|x\|\leq1$ if and only if $|x|\leq e$), they have to be isometric to $C(K)$ for some compact Hausdorff space $K$ (cf. \cite[Theorem 1.b.6]{LT2}).

\begin{proposition}\label{p:AM}
Let $X$ be an AM-space. $X$ has the strong Nakano property if and only if $X$ has a strong unit.
\end{proposition}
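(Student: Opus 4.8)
The plan is to handle the two implications separately, with the nontrivial content in the ``only if'' direction. For the ``if'' direction, suppose $X$ has a strong unit $e$, normalized so that $\|e\|=1$. Given a norm-bounded upwards directed set $A\subset X_+$ with $s=\sup\{\|a\|:a\in A\}$, I would note that for each $a\in A$ one has $\|a/s\|\leq 1$ and hence, by the defining property of the strong unit, $a\leq se$; thus $b=se$ is an upper bound with $\|b\|=s$, giving the strong Nakano property (indeed with constant one). Note this direction does not even use the AM structure.

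For the converse, the key observation is that in an AM-space the positive part of the unit ball $A:=X_+\cap B_X$ is \emph{upwards directed}: if $x,y\in A$ then $x\vee y\in X_+$ and $\|x\vee y\|=\max\{\|x\|,\|y\|\}\leq 1$, so $x\vee y\in A$. This set is norm-bounded and, since $|x|/\|x\|\in A$ for every nonzero $x$, we have $\sup\{\|a\|:a\in A\}=1$ (assuming $X\neq\{0\}$, the remaining case being trivial). Applying the strong Nakano property to $A$ then produces an upper bound $b\in X_+$ with $\|b\|=1$.

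It remains to check that this $b$ is a strong unit, which is where the argument must be made carefully. Since $b$ dominates every positive element of norm at most one, a scaling argument shows $x\leq\|x\|\,b$ for all $x\in X_+$, and hence $|x|\leq\|x\|\,b$ for all $x\in X$; in particular $b$ is an order unit. Writing $\|x\|_b=\inf\{\lambda>0:|x|\leq\lambda b\}$ for the associated order-unit norm, the previous inequality gives $\|x\|_b\leq\|x\|$, while monotonicity of the lattice norm together with $\|b\|=1$ yields $\|x\|\leq\lambda$ whenever $|x|\leq\lambda b$, so that $\|x\|\leq\|x\|_b$. Therefore the lattice norm coincides with the order-unit norm determined by $b$, which is exactly the statement that $b$ is a strong unit. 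I expect the main point requiring attention to be the verification that the upper bound supplied by the strong Nakano property is genuinely a strong unit (rather than merely an order unit), which is precisely where the normalization $\|b\|=1$ is used.
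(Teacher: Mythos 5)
Your proof is correct and follows essentially the same route as the paper: the converse direction is identical (apply the strong Nakano property to the positive part of the unit ball, which is upwards directed precisely because of the AM-identity $\|x\vee y\|=\max\{\|x\|,\|y\|\}$), while in the forward direction you argue directly with $b=se$ instead of invoking Kakutani's representation of $X$ as $C(K)$ with constant functions as upper bounds --- the same idea, just made self-contained. Your additional verification that the upper bound $b$ with $\|b\|=1$ is genuinely a strong unit in the normalized sense ($\|x\|\leq 1$ iff $|x|\leq b$), rather than merely an order unit, is a welcome bit of care that the paper's proof leaves implicit.
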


\begin{proof}
Suppose $X$ is an AM-space with a strong unit. Then there exists a compact Hausdorff space $K$ such that $X$ is lattice isometric to the space $C(K)$ \cite[Theorem 4.29]{AB}. Since $C(K)$ has the strong Nakano for every compact $K$ (with constant functions playing the role of the corresponding upper bounds) the same holds for $X$.
 
Conversely, suppose an AM-space $X$ has the strong Nakano property. The set $B_+=\{x\in X_+:\|x\|\leq1\}$ is an upward directed norm bounded subset of $X_+$. Hence, there is  $e\in X_+$ an upper bound of $B_+$ such that 
$$
\|e\| = \sup \{\|b\|,b\in B_+\}=1.
$$
In particular, $|x| \le \|x\| e$ for every $x\in X$. Therefore, $e$ is a strong unit of $X$.
\end{proof}

A Banach lattice $X$ is monotonically complete (also known in the literature as having a Levi norm) if every norm-bounded upwards direct set $A\subset X_+$ has a supremum in $X$. Note that a monotonically complete Banach lattice is always Dedekind complete. Also note that dual Banach lattices, and KB-spaces (those Banach lattices where every increasing norm bounded sequence converges) are monotonically complete. For convenience, we will use the following terminology: Given $\lambda\geq1$, a Banach lattice $X$ has the $\lambda$-Fatou property if for every upwards directed subset $A \subset X_+$ with supremum $b$, we have $$\| b\| \leq \lambda \sup \{\|a\|:a\in A\}.$$ Similarly, we will say $X$ has the $\lambda$-Nakano property if for every upwards directed order bounded subset $A \subset X_+$, we have  $$\inf\{\| b\|:b\text{ an upper bound for }A\} \leq \lambda \sup \{\|a\|:a\in A\}.$$

\begin{proposition}\label{p:monotonicallycomplete}
Suppose $X$ is a monotonically complete Banach lattice. For every $\lambda\geq1$, the following are equivalent:
\begin{enumerate}
\item $X$ has the $\lambda$-Fatou property.
\item $X$ has the $\lambda$-Nakano property.
\item $X$ has the $\lambda$-strong Nakano property.
\end{enumerate}
\end{proposition}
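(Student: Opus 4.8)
The plan is to prove the two generic implications $(3)\Rightarrow(2)\Rightarrow(1)$, which hold in any Banach lattice, and then to close the loop with $(1)\Rightarrow(3)$, which is the only place where monotone completeness is genuinely needed. Throughout I would lean on two elementary facts about a lattice norm: first, that it is monotone on the positive cone, i.e. $0\leq x\leq y$ implies $\|x\|\leq\|y\|$; and second, as an immediate consequence, that every order bounded subset of $X_+$ is automatically norm bounded (if $A\subset[0,u]$ then $\|a\|\leq\|u\|$ for all $a\in A$).

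For $(3)\Rightarrow(2)$, I would take an order bounded upwards directed set $A\subset X_+$. By the second fact it is norm bounded, so the $\lambda$-strong Nakano property supplies an upper bound $b$ with $\|b\|\leq\lambda\sup\{\|a\|:a\in A\}$; since $b$ is one particular upper bound, $\inf\{\|b'\|:b'\text{ an upper bound of }A\}\leq\|b\|$, and $(2)$ follows. For $(2)\Rightarrow(1)$, I would start from an upwards directed $A\subset X_+$ admitting a supremum $b$; such an $A$ is in particular order bounded, so $(2)$ applies. The point to get right here is that for such an $A$ the infimum in the $\lambda$-Nakano property is actually attained at $b$: any upper bound $b'$ dominates the least upper bound $b$, and since $b\geq0$ we get $0\leq b\leq b'$ and hence $\|b\|\leq\|b'\|$ by norm monotonicity, so $\inf\{\|b'\|:b'\text{ an upper bound}\}=\|b\|$. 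The $\lambda$-Nakano inequality then reads $\|b\|\leq\lambda\sup\{\|a\|:a\in A\}$, which is exactly $(1)$.

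The remaining implication $(1)\Rightarrow(3)$ is where the hypothesis enters. Given a norm bounded upwards directed set $A\subset X_+$, monotone completeness guarantees that $b:=\sup A$ exists in $X$, with $b\in X_+$ an upper bound of $A$. Applying the $\lambda$-Fatou property to $A$ and its supremum $b$ yields $\|b\|\leq\lambda\sup\{\|a\|:a\in A\}$, so $b$ is precisely the upper bound demanded by the $\lambda$-strong Nakano property. There is no deep obstacle in any single step; the subtlety is the observation in $(2)\Rightarrow(1)$ that the infimum of the norms of upper bounds is attained at the supremum, which is what lets the a priori weaker Nakano inequality deliver the Fatou inequality, while monotone completeness is used only to ensure that this supremum exists at all, which is exactly what $(1)\Rightarrow(3)$ requires.
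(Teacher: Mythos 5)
Your proof is correct, and it closes the cycle of implications in the opposite direction from the paper. The paper proves $(1)\Rightarrow(2)\Rightarrow(3)$, invoking monotone completeness in both of those steps (each time to produce the supremum of the directed set, whose norm then controls, or is controlled by, the infimum over upper bounds), and dismisses $(3)\Rightarrow(1)$ as automatic. You instead observe that $(3)\Rightarrow(2)$ and $(2)\Rightarrow(1)$ are generic facts valid in any Banach lattice --- the first because order bounded sets are norm bounded, the second because the infimum of the norms of the upper bounds is attained at the supremum by monotonicity of a lattice norm --- and you confine monotone completeness to the single implication $(1)\Rightarrow(3)$, where it is used exactly once to manufacture the supremum that the $\lambda$-Fatou inequality then certifies as the desired upper bound. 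The underlying ingredients are the same (existence of suprema plus $0\leq x\leq y \Rightarrow \|x\|\leq\|y\|$), but your decomposition is arguably cleaner: it makes explicit that the chain $\lambda$-strong Nakano $\Rightarrow$ $\lambda$-Nakano $\Rightarrow$ $\lambda$-Fatou needs no completeness hypothesis at all, which is consistent with (and slightly sharpens) the remarks in the paper's introduction about these implications, whereas the paper's route leaves that observation implicit.
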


\begin{proof} 
$(1)\Rightarrow (2)$
Suppose that the norm on $X$ has the $\lambda$-Fatou property. Let $A \subset X_+$ be an  upwards directed order bounded subset. As $X$ is monotonically complete, $A$ has a supremum $y_0$ in  $X$. Hence, we get $\|y_0 \| \leq \lambda \sup \{\|a\|:a\in A\}.$ Let $B$ denote the set of all upper bounds for $A$. It follows that
$$ 
\inf \{\|b\|:b\in B\} \le \|y_0 \|\leq \lambda\sup\{\|a\|,a\in A\}.
$$
As the converse inequality always holds, this finishes the proof.

$(2)\Rightarrow (3)$
Let $A$ be an upwards directed norm bounded subset of $X_+$. Since $X$ is monotonically complete and has the $\lambda$-Nakano property, it follows that $A$ has a supremum $y$ and  we have
$$
\|y\|\leq \inf \{\|b\|:b \in B \} \leq \lambda \sup \{\|a\|: a\in A\},
$$
 where $B$ is the set of all upper bounds for $A$.
 
$(3)\Rightarrow (1)$ always holds.
\end{proof}

As a consequence of Proposition \ref{p:monotonicallycomplete} and \cite[Proposition 2.4.19]{M-N} we have a fairly large class of Banach lattices with the strong Nakano property:

\begin{corollary}\label{c:dual}
Every dual Banach lattice has the strong Nakano property.
\end{corollary}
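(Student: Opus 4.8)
The plan is to deduce Corollary~\ref{c:dual} directly from Proposition~\ref{p:monotonicallycomplete} by verifying its two hypotheses for an arbitrary dual Banach lattice $X=Y^*$: namely, that $X$ is monotonically complete, and that $X$ has the $1$-Fatou property (equivalently, the ordinary Fatou property). Once both are in hand, the implication $(1)\Rightarrow(3)$ of Proposition~\ref{p:monotonicallycomplete}, applied with $\lambda=1$, yields that $X$ has the (plain) strong Nakano property, which is precisely the claim.

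For the monotone completeness, I would recall the standard fact that every dual Banach lattice is monotonically complete; this is exactly the content cited as \cite[Proposition 2.4.19]{M-N}, so no independent argument is needed. Concretely, if $A\subset X_+$ is a norm-bounded upwards directed set, its elements are uniformly bounded in the dual norm, and one checks that the pointwise (weak$^*$) limit along the net gives a functional in $Y^*$ that serves as the supremum of $A$ in the lattice order; the order-boundedness of this supremum follows from the norm bound. The Fatou property with constant $1$ should come from the same circle of ideas: for an upwards directed $A\subset X_+$ with supremum $b$, one has $\|b\|=\sup_{y\in B_{Y_+}} b(y)$, and since $b=\sup A$ one can interchange the two suprema (using directedness to pass to the supremum over $A$ inside the pairing with each fixed positive $y\in Y$), obtaining $\|b\|=\sup_{a\in A}\|a\|$; this is again encapsulated in the cited proposition from \cite{M-N}.

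Thus the proof is essentially a one-line invocation: \emph{By \cite[Proposition 2.4.19]{M-N}, every dual Banach lattice is monotonically complete and has the (1-)Fatou property, so the conclusion follows from the implication $(1)\Rightarrow(3)$ in Proposition~\ref{p:monotonicallycomplete} with $\lambda=1$.} The only genuine content sits in the external reference, and the main (minor) obstacle is simply ensuring that \cite[Proposition 2.4.19]{M-N} indeed supplies both ingredients in the normalized ($\lambda=1$) form required; if it only delivers monotone completeness, one would additionally need the classical statement that dual Banach lattice norms are Fatou, which is itself standard. No delicate estimates are expected, and I anticipate the entire argument to be a short citation-driven deduction rather than a computation.
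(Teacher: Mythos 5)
Your proposal is correct and matches the paper's argument exactly: the paper likewise deduces the corollary from Proposition~\ref{p:monotonicallycomplete} combined with \cite[Proposition 2.4.19]{M-N}, which indeed supplies both the Levi (monotone completeness) and Fatou properties of a dual norm in the $\lambda=1$ form needed.
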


It is well known that KB-spaces are in particular order continuous \cite[Theorem 2.4.2]{M-N}. Within the class of Banach lattices with the strong Nakano property, the converse also holds:

\begin{proposition}
Suppose $X$ is a Banach lattice. The following are equivalent:
\begin{enumerate}
\item $X$ is a KB-space.
\item $X$ has order continuous norm and the strong Nakano property.
\end{enumerate}
\end{proposition}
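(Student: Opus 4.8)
The plan is to prove the two implications separately, leaning on Proposition \ref{p:monotonicallycomplete} for $(1)\Rightarrow(2)$ and on the characterizations of order continuity for $(2)\Rightarrow(1)$. For $(1)\Rightarrow(2)$, assume $X$ is a KB-space. Order continuity of the norm is immediate from \cite[Theorem 2.4.2]{M-N}, as already recalled just above the statement. For the strong Nakano property I would first observe that order continuity by itself yields the $1$-Fatou property: if $A\subset X_+$ is upwards directed with supremum $b$, then the net $(b-a)_{a\in A}$ decreases to $0$, so $\|b-a\|\to0$ by order continuity, and hence $\sup\{\|a\|:a\in A\}=\|b\|$. Since a KB-space is monotonically complete (as noted in the paragraph introducing that notion), I would then apply Proposition \ref{p:monotonicallycomplete} with $\lambda=1$ to pass from the $1$-Fatou to the $1$-strong Nakano property; because the inequality $\|b\|\geq\sup\{\|a\|:a\in A\}$ holds automatically for any upper bound $b$, this is exactly the strong Nakano property.

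For $(2)\Rightarrow(1)$, assume $X$ has order continuous norm and the strong Nakano property, and take an increasing norm-bounded sequence $(x_n)$; replacing $x_n$ by $x_n-x_1$ we may assume $(x_n)\subset X_+$, and we must show it converges. The decisive use of the strong Nakano property is to manufacture an order bound: applied to the upwards directed set $A=\{x_n:n\in\mathbb N\}$ it produces an upper bound $b\in X_+$, so $(x_n)$ is order bounded by $b$. I would stress here that order continuity alone does not suffice, since the summing basis in $c_0$ yields an increasing norm-bounded sequence with no upper bound (and $c_0$ is indeed not a KB-space); strong Nakano is precisely what closes this gap. Once $(x_n)$ is order bounded, I would invoke that an order bounded increasing sequence in an order continuous Banach lattice is norm convergent: such an $X$ is Dedekind complete, so $x:=\sup_n x_n$ exists, whence $(x-x_n)\downarrow0$ and order continuity forces $\|x-x_n\|\to0$. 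Thus every increasing norm-bounded sequence converges and $X$ is a KB-space.

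The main obstacle is the $(2)\Rightarrow(1)$ direction, and within it the passage from "order bounded" to "norm convergent." The subtlety is that order continuity is a statement about \emph{decreasing} nets tending to $0$, so before it can be applied one needs the supremum of the sequence to exist; this is exactly the point where the Dedekind ($\sigma$-)completeness encoded in order continuity (see \cite[Theorem 2.4.2]{M-N}) must be brought in. Conceptually, the equivalence says that the strong Nakano property supplies the one ingredient order continuity cannot guarantee on its own---an order bound for norm-bounded increasing sequences---and the $c_0$ example pinpoints this as the precise discrepancy between order continuity and the KB property.
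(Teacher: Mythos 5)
Your proof is correct. The $(2)\Rightarrow(1)$ direction is essentially the paper's argument: strong Nakano supplies an order bound for the increasing norm-bounded sequence, and then order continuity (via Dedekind completeness and \cite[Theorem 2.4.2]{M-N}) forces norm convergence to the supremum; your added remark about $c_0$ correctly isolates why the order bound is the missing ingredient. For $(1)\Rightarrow(2)$ you take a genuinely different route. The paper uses the fact that a KB-space is a projection band in $X^{**}$, invokes Corollary \ref{c:dual} to find a norm-optimal upper bound in $X^{**}_+$, and pushes it down with the band projection $P$, using $\|Py\|\leq\|y\|$. You instead stay inside $X$: order continuity gives the $1$-Fatou property (the net $(b-a)_{a\in A}$ decreases to $0$ when $b=\sup A$), monotone completeness of KB-spaces gives the existence of suprema, and Proposition \ref{p:monotonicallycomplete} with $\lambda=1$ upgrades $1$-Fatou to $1$-strong Nakano, which together with the trivial reverse inequality is exactly the strong Nakano property. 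Both arguments ultimately rest on \cite[Theorem 2.4.2]{M-N} (either for the projection band statement or for monotone completeness plus order continuity), but yours makes explicit use of the machinery the paper sets up in Proposition \ref{p:monotonicallycomplete} and avoids working in the bidual directly, which is arguably cleaner given that the paper has already recorded that KB-spaces are monotonically complete; the paper's version has the mild advantage of not needing the intermediate $1$-Fatou observation.
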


\begin{proof}
$(1)\Rightarrow (2)$: Let $A$ be an upwards directed norm bounded set in $X_+$. As $X$ is a KB-space then by \cite[Theorem 2.4.2]{M-N} it is a projection band in $X^{**}$. By Corollary \ref{c:dual} there is $y$ in $X^{**}_+$ an upper bound of $A$ such that $\|y\|= \sup { \{\|a\|: a\in A}\}$. If $P$ denotes the band projection onto $X$, it follows that $Py$ is an upper bound of $A$ in $X$ with $\|Py\|\leq \|y\|$. Hence $X$ has the strong Nakano property. Order continuity follows from \cite[Theorem 2.4.2]{M-N}.

$(2)\Rightarrow (1)$:
 Let $ (x_n)_{n \in \mathbb{N}}$ be a monotone sequence in the unit ball of $X$. Without loss of generality, we may assume that $(x_n) _{n \in\mathbb{N}}$ is an increasing positive sequence. Since $X$ has the strong Nakano property there is $y_0$ in $X_+$ an upper bound of $ (x_n) _{n \in \mathbb{N}}$. It follows from order continuity and \cite[Theorem 2.4.2(iii)]{M-N} that $(x_n)_{n \in\mathbb{N} }$ is convergent. Hence, $X$ is a KB-space.
 \end{proof}

Looking at general directed sets for the strong Nakano property is not needed when the density character of the Banach lattice in question is small. In particular, we have the following.

\begin{lemma}\label{l:separable strong Nakano}
Let $X$ be a separable Banach lattice and $\lambda\geq1$. $X$ has the $\lambda$-strong Nakano property if and only if it has the  $\lambda$-strong $\sigma$-Nakano property.
\end{lemma}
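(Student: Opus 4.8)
The plan is to handle the two implications separately. The forward direction is immediate: an increasing sequence $(x_n)\subset X_+$ is in particular a norm-bounded upwards directed set, so the $\lambda$-strong Nakano property directly supplies an upper bound $b\in X_+$ with $\|b\|\leq \lambda\sup_n\|x_n\|$, which is exactly the $\lambda$-strong $\sigma$-Nakano property.

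For the converse, assume $X$ has the $\lambda$-strong $\sigma$-Nakano property and fix a norm-bounded upwards directed set $A\subset X_+$, writing $s=\sup\{\|a\|:a\in A\}$. The strategy is to replace $A$ by a well-chosen increasing sequence whose upper bounds remain upper bounds for all of $A$, and then apply the sequential hypothesis. Since $X$ is separable, so is $A$ in the norm topology; fix a countable norm-dense subset $\{y_k:k\in\mathbb{N}\}\subseteq A$. Using that $A$ is upwards directed, I would build inductively an increasing sequence $(x_n)\subset A$ as follows: choose $x_0\in A$ arbitrarily and, given $x_{n-1}$, use directedness to select $x_n\in A$ with $x_n\geq x_{n-1}$ and $x_n\geq y_n$. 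Then $(x_n)$ is increasing, lies in $X_+$, satisfies $\|x_n\|\leq s$ for all $n$, and $y_k\leq x_k\leq x_n$ for every $n\geq k$.

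Now I would invoke the $\lambda$-strong $\sigma$-Nakano property on $(x_n)$ to obtain an upper bound $b\in X_+$ with $\|b\|\leq\lambda\sup_n\|x_n\|\leq\lambda s$. It remains to verify that $b$ bounds the whole set $A$, and this is the step deserving care. By construction $b\geq x_k\geq y_k$ for all $k$, so $b$ dominates the dense subset $\{y_k\}$. For arbitrary $a\in A$, choose a subsequence $y_{k_j}\to a$ in norm; since $b-y_{k_j}\in X_+$ and the positive cone is norm-closed, passing to the limit gives $b-a\in X_+$, that is $a\leq b$. Hence $b$ is an upper bound for $A$ with $\|b\|\leq\lambda s=\lambda\sup\{\|a\|:a\in A\}$, establishing the $\lambda$-strong Nakano property.

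The only genuine obstacle is this final domination argument: one must ensure that an upper bound of the countable sequence actually bounds the (possibly uncountable) set $A$. This is precisely where separability enters, allowing the reduction to a norm-dense sequence, combined with the closedness of the positive cone to pass from the dense family to an arbitrary element of $A$. Everything else—the inductive construction and the norm estimate—is routine once this point is in place.
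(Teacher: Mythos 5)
Your argument is correct and follows essentially the same route as the paper: separability yields a countable dense subset of $A$, one builds from it an increasing norm-bounded sequence dominating that subset, applies the $\sigma$-version, and then passes from the dense family to all of $A$ by a limiting argument. The only cosmetic differences are that the paper uses the finite suprema $\bigvee_{i=1}^n a_i$ instead of selecting dominating elements inside $A$ via directedness, and it closes with an order-convergent subsequence where you invoke norm-closedness of the positive cone; both variants are valid.
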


\begin{proof}
Let us prove the non-trivial implication. Suppose that $X$ has the $\lambda$-strong $\sigma$-Nakano property.
Let $A$ be an upwards directed norm bounded subset in $X_+$. As $A$ is also separable then it contains a countable dense subset $A_0=\{a_1,a_2,\ldots\}$. Let us consider the sequence
$$y_n=\bigvee_{i=1}^n a_i,\quad\quad n\in\mathbb N.$$

Note $(y_n)_{n\in\mathbb N}$ is an increasing norm bounded sequence in $X_+$. By the $\lambda$-strong $\sigma$-Nakano property there is $u\in X_+$, an upper bound of $(y_n)_{n\in\mathbb N}$, hence of $A_0$, such that 
$$
\|u\|\leq \lambda\sup\{\|y_n\|:n\in\mathbb N\}\leq \lambda \sup\{\|a\|:a\in A\}.
$$

Now, for every $a \in A$ there is a sequence $(x_n)_{n \in \mathbb{N}}$ in $A_0$ that converges in norm to $a$. Hence, there is a subsequence $(x_{n_k})_{k \in \mathbb{N}} \subset A_0$ which converges in order to $a$. Since $x_{n_k} \leq u$ then $a\leq u$, so $u$ is also an upper bound of $A$ and we have $\|u\|\leq  \lambda \sup\{\|a\|:a\in A\},$ so $A$ has the $\lambda$-strong Nakano property.
\end{proof}

The previous equivalence does not remain true if the Banach lattice $X$ is not separable as the following simple example shows.

\begin{example}
Let $$X=\{x\in \ell_{\infty}(\mathbb{R}):supp(x)\;\textrm{is countable}\},$$ where $supp(x)$ denotes the support of $x\in \ell_\infty(\mathbb R)$. Given an increasing norm bounded sequence $(x_n)$ in $X_+$, let $M=\sup_n\|x_n\|$ and let $A=\bigcup_{n\in\mathbb N} supp(x_n)$. Let $x\in\ell_\infty(\mathbb R)$ take the value $M$ on $A$ and 0 elsewhere. Clearly, $x$ is an upper bound of $(x_n)_{n\in\mathbb N}$ in $X$ with $\sup\{\|x_n\|_{\infty}:n\in \mathbb{N}\}=M=\|x\|$, so $X$ has the strong  $\sigma$- Nakano property.

Now for each countable set $A\subset \mathbb R$ let $x_A =\chi_A$ denote the indicator function of $A$. Clearly, $(x_A)$ is upwards directed (for the inclusion) and norm bounded in $X_+$. However, if $x\in \ell_\infty(\mathbb R)$ is an upper bound of $(x_A)$ then $x$ must be at least 1 on every point of $\mathbb R$. Thus, $x\notin X$ and this space fails the strong Nakano property.
\end{example}

\subsection{Stability of the strong Nakano property}
Recall that if $X$, $Y$ are Banach lattices such that $Y$ is Dedekind complete, then the space of regular operators $L_r(X,Y)$ is a Dedekind complete Banach lattice with the regular norm (cf. \cite[Section 4.4]{AB}). We can characterize when this space has the strong Nakano property:

\begin{proposition}
Let $Y$ be a Dedekind complete Banach lattice. The following are equivalent:
\begin{enumerate}
    \item $Y$ has the strong Nakano property.
    \item For every Banach lattice $X$, the space of regular operators $L_r(X,Y)$ has the strong Nakano property.
    \item For some Banach lattice $X\neq\{0\}$, the space of regular operators $L_r(X,Y)$ has the strong Nakano property.
\end{enumerate}
\end{proposition}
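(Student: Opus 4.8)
The plan is to prove the cyclic chain $(1)\Rightarrow(2)\Rightarrow(3)\Rightarrow(1)$. The implication $(2)\Rightarrow(3)$ is immediate, since one may take any nonzero Banach lattice for $X$ (for instance $X=\mathbb{R}$, where $L_r(\mathbb{R},Y)\cong Y$). The two substantial steps transfer the strong Nakano property from $Y$ up to $L_r(X,Y)$ and back down, and in both cases the bridge is the family of rank-one operators $\varphi\otimes y\colon x\mapsto\varphi(x)\,y$. These are convenient because their regular norm is trivial to compute: when $\varphi\in X^*_+$ and $y\in Y_+$ the operator $\varphi\otimes y$ is positive, so $\|\varphi\otimes y\|_r=\|\varphi\otimes y\|=\|\varphi\|\,\|y\|$.

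For $(3)\Rightarrow(1)$, fix the Banach lattice $X\neq\{0\}$ for which $L_r(X,Y)$ has the strong Nakano property. I would choose $x_0\in X_+$ with $\|x_0\|=1$ and, via Hahn--Banach (replacing a norming functional by its modulus), a functional $\varphi\in X^*_+$ with $\|\varphi\|=1$ and $\varphi(x_0)=1$. Given a norm-bounded upwards directed $A\subset Y_+$ with $M:=\sup\{\|a\|:a\in A\}$, the family $\{\varphi\otimes a:a\in A\}$ is upwards directed, contained in $L_r(X,Y)_+$, and norm-bounded by $M$. The strong Nakano property of $L_r(X,Y)$ then yields an upper bound $T\in L_r(X,Y)_+$ with $\|T\|_r=M$, and I claim $b:=Tx_0\in Y_+$ works: from $T\geq\varphi\otimes a$ one gets $b\geq\varphi(x_0)a=a$ for every $a\in A$, while $\|b\|\leq\|T\|_r\|x_0\|=M$, and monotonicity of the norm forces $\|b\|=M$.

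The heart of the matter is $(1)\Rightarrow(2)$, with $X$ now arbitrary. Let $\mathcal A\subset L_r(X,Y)_+$ be norm-bounded and upwards directed, and put $M=\sup\{\|S\|_r:S\in\mathcal A\}$. The key observation is that, although $\mathcal A$ itself need not be order bounded in $L_r(X,Y)$, its evaluation at a fixed $x\in X_+$ is: the set $\{Sx:S\in\mathcal A\}$ is upwards directed and norm-bounded by $M\|x\|$ in $Y_+$, so the strong Nakano property of $Y$ provides an upper bound $b_x\in Y_+$ with $\|b_x\|=\sup_S\|Sx\|$. In particular $\{Sx:S\in\mathcal A\}$ is order bounded, so by Dedekind completeness of $Y$ its supremum exists, and I define $Tx:=\sup_{S\in\mathcal A}Sx$. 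I would then verify that $x\mapsto Tx$ is positively homogeneous and additive on $X_+$; additivity is exactly where directedness enters, since for $S_1,S_2\in\mathcal A$ one dominates both by some $S_3\in\mathcal A$, giving $\sup_S S(x+y)=\sup_S Sx+\sup_S Sy$. By the Kantorovich extension theorem $T$ extends to a positive operator $T\in L_r(X,Y)$, which is an upper bound for $\mathcal A$ by construction. Finally $Sx\leq Tx\leq b_x$ gives $\|Tx\|=\sup_S\|Sx\|\leq M\|x\|$, hence $\|T\|_r=\|T\|\leq M$, while $T\geq S\geq0$ yields $\|T\|_r\geq\|S\|_r$ for all $S$, so $\|T\|_r=M$.

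The main obstacle is precisely this passage from the fiberwise suprema $Tx$ to a genuine positive linear operator of the correct regular norm. The strong Nakano property of $Y$ is what upgrades the merely norm-bounded family $\mathcal A$ into \emph{order}-bounded families at each point, and the directedness of $\mathcal A$ is what makes the pointwise suprema additive; once $T$ is known to be a well-defined positive operator, the norm identities follow routinely from monotonicity of the lattice norm and the equality $\|T\|_r=\|T\|$ for positive $T$.
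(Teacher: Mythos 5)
Your proposal is correct and follows essentially the same route as the paper: rank-one operators $\varphi\otimes a$ for $(3)\Rightarrow(1)$, and pointwise suprema (made order-bounded fiberwise by the strong Nakano property of $Y$ and existing by Dedekind completeness) for $(1)\Rightarrow(2)$. The only cosmetic difference is that you carry out the Kantorovich extension of $x\mapsto\sup_S Sx$ by hand, whereas the paper cites \cite[Theorem 1.19]{AB} for the fact that the supremum of an upwards directed family of positive operators exists and is computed pointwise.
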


\begin{proof}
$(1)\Rightarrow (2)$: Suppose that $Y$ has the strong Nakano property and let $X$ be any Banach lattice. Let $(T_\alpha)_{\alpha \in A}$ be an upwards directed norm bounded set in $L_r(X,Y)_+$. For every $x\in X_+$, $(T_\alpha x)_{\alpha}$ is an upwards directed norm bounded set in $Y_+$. Hence, by the strong Nakano property of $Y$, $(T_\alpha x)_{\alpha}$ has an upper bound in $Y$. By Dedekind completeness, the supremum of $(T_\alpha x)_{\alpha}$ exists in $Y_+$. Let us call $T_x$ this supremum and note that
$\|T_x\|=\sup\limits_{\alpha}\|T_{\alpha} x\|,$ for every $x\in X_+$.

Now, by \cite[Theorem 1.19]{AB}, the set $(T_\alpha)_{\alpha}$ has a supremum $T$ in $L_r(X,Y)$ and 
$$
T(x)=(\sup\limits_{\alpha}T_{\alpha})(x)=\sup\{T_{\alpha}(x):\alpha \in A\}=T_x.
$$ 
Moreover, 
$$
\|T\|=\sup_{x\in X_+,\|x\|\leq1}\|Tx\|=\sup_{x\in X_+,\|x\|\leq1}\|T_x\|=\sup_{x\in X_+,\|x\|\leq1}\sup_\alpha \|T_\alpha x\|=\sup_\alpha\|T_\alpha\|.
$$
Hence, $L_r(X,Y)$ has the strong Nakano property.

The implication $(2)\Rightarrow (3)$ is trivial.

$(3)\Rightarrow (1)$: Suppose that for some $X\neq\{0\}$, the space $L_r(X,Y)$ has the strong Nakano property. Let $(y_\alpha)_{\alpha}$ be an upwards directed norm bounded set in $Y_+$. Let $x_0 \in X_+$ with $\|x_0\|=1$. By Hahn-Banach theorem we can take $x_0^*\in X^*_+$ such that $x_0^*(x_0)=1$ and $\|x_0^*\|=1$. For each $\alpha$ let $T_{\alpha}$ be the rank-one operator
$$
T_{\alpha}(x)=x_0^*(x)y_\alpha,\quad\quad \text{for }x \in X.
$$
Clearly, $(T_{\alpha})_{\alpha}$ is an upwards directed norm bounded set in $L_r(X,Y)_+$, so by the strong Nakano property of $L_r(X,Y) $ and \cite[Theorem 1.19]{AB} there is $T$ in $L_r(X,Y)_+$ such that
$$T(x)=\sup\limits_{\alpha} T_{\alpha}(x),\quad\quad \text{for } x\in X_+,$$ with 
$\|T\|=\sup\limits_{\alpha} \|T_{\alpha}\|.$
It follows then that
$$\sup\limits_{\alpha} y_\alpha=\sup\limits_{\alpha} T_{\alpha}(x_0)=T(x_0)$$ and $$\|T(x_0)\|\leq \|T\|=\sup\limits_{\alpha} \|T_{\alpha}\|\leq \sup\limits_{\alpha} \|y_{\alpha}\|.$$
Hence, $Y$ has the strong Nakano property.
\end{proof}

\begin{lemma}
Let $X$ be a Banach lattice and let $Y$ be an order dense sublattice with the strong Nakano property. Then $X$ has the strong Nakano property.
\end{lemma}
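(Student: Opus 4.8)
The plan is to transfer the problem down to the order dense sublattice $Y$, solve it there using the strong Nakano property, and then lift the resulting upper bound back up to $X$ by exploiting order density once more. So let $A\subset X_+$ be a norm-bounded upwards directed set and write $M=\sup\{\|a\|:a\in A\}$. I want to produce an upper bound $b\in X_+$ of $A$ with $\|b\|=M$; note that any upper bound of $A$ automatically satisfies $\|b\|\geq M$, so the real content is to find one with $\|b\|\leq M$.

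First I would consider the set
$$B=\{y\in Y_+:y\leq a\text{ for some }a\in A\}.$$
I expect $B$ to inherit the good features of $A$. It is norm-bounded by $M$, since each $y\in B$ satisfies $\|y\|\leq\|a\|\leq M$, and it is upwards directed in $Y_+$: if $y_1\leq a_1$ and $y_2\leq a_2$ with $a_1,a_2\in A$, pick $a_3\in A$ dominating both; since $Y$ is a sublattice, $y_1\vee y_2\in Y_+$, and $y_1\vee y_2\leq a_1\vee a_2\leq a_3$, so $y_1\vee y_2\in B$. Applying the strong Nakano property of $Y$ to $B$ (the order and norm of $Y$ being those induced from $X$) then yields an upper bound $b\in Y_+$ of $B$ with $\|b\|=\sup\{\|y\|:y\in B\}\leq M$.

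The key step is to check that this $b$ is in fact an upper bound for $A$ in $X$. Here I would invoke the standard characterization of order density (see \cite{AB}): for every $a\in A$ one has $a=\sup\{y\in Y_+:y\leq a\}$, the supremum being taken in $X$. Every such $y$ lies in $B$ and is therefore dominated by $b$ in $Y$, hence in $X$ as well, since the order of $Y$ is the one inherited from $X$. Thus $b$ is an upper bound in $X$ of the set $\{y\in Y_+:y\leq a\}$, whose supremum is $a$, which forces $a\leq b$. As $a\in A$ was arbitrary, $b$ is an upper bound of $A$, so $\|b\|\geq M$, and combined with $\|b\|\leq M$ we conclude $\|b\|=M$.

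The only delicate point is this last lifting step: the strong Nakano property supplies an upper bound for $B$, whereas what we want is an upper bound for $A$, and there is no direct comparison between elements of $A$ and elements of $B$ except through order density. The argument hinges precisely on the fact that order density recovers each $a$ as the supremum in $X$ of the elements of $Y_+$ below it, together with the compatibility of the orders of $Y$ and $X$; everything else (directedness of $B$, the norm bound) is routine.
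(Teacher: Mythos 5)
Your proposal is correct and follows essentially the same route as the paper: you form the set of positive elements of $Y$ dominated by some element of the given directed set, apply the strong Nakano property of $Y$ to it, and use the order-density fact that each $a\in X_+$ is the supremum in $X$ of $\{y\in Y_+: y\leq a\}$ to conclude that the resulting bound dominates the original set. In fact you spell out the final lifting step, which the paper dispatches with ``since $Y$ is order dense the conclusion follows,'' so nothing is missing.
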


\begin{proof}
Assume that $Y$ has the strong Nakano property. Let $(x_{\alpha})_{\alpha}$ be an upwards directed norm bounded set in $X_+$.
Let us consider the set
$$
A=\{y \in Y :0 <y \le x_{\alpha} \; \text{for some}\; \alpha  \}.
$$
Clearly, $A$ is an upwards directed (it is closed under finite suprema) norm bounded subset in $Y_+$. Hence, there is $b\in Y_+$ which is an upper bound of $A$ with
$\|b\|=\sup_{y \in A} \|y\|$. Since $Y$ is order dense the conclusion follows.
\end{proof}

\begin{proposition}\label{p:regular}
Every regular sublattice of a Banach lattice with the Fatou property also has the Fatou property.
\end{proposition}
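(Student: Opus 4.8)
The plan is to reduce the Fatou property in the sublattice $Y$ directly to the Fatou property assumed in $X$, using the defining feature of a regular sublattice: that suprema (and infima) computed in $Y$ are preserved under the inclusion $Y\hookrightarrow X$. I recall also that the lattice norm on $Y$ is simply the restriction of the norm of $X$, so that computing $\|\cdot\|$ in $Y$ or in $X$ yields the same value; this lets me freely pass norm computations between the two spaces.

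First I would take an arbitrary upwards directed set $A\subset Y_+$ which admits a supremum $b=\sup_Y A$ in $Y$; this is precisely the situation in which the Fatou property must be verified. The same set $A$ is then an upwards directed subset of $X_+$, since directedness is a property of the induced order and hence is inherited from $Y$. Because $Y$ is a regular sublattice, the supremum $b$ of $A$ taken in $Y$ coincides with the supremum of $A$ taken in $X$; in particular $\sup_X A$ exists and equals $b$.

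Next I would invoke the Fatou property of $X$, applied to the upwards directed set $A\subset X_+$ with supremum $b$, to obtain
$$
\|b\|=\sup\{\|a\|:a\in A\}.
$$
Since all these norms agree with those computed in $Y$, this is exactly the Fatou property for $Y$, which completes the argument.

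The only point requiring genuine care — and the main, if modest, obstacle — is the correct use of regularity: one must ensure that the existence of the supremum in $Y$ truly transfers the \emph{supremum} to $X$, rather than merely producing some upper bound there. This transfer is exactly what distinguishes a regular sublattice from an arbitrary one, so the whole proposition hinges on that definitional property; beyond it, no analytic estimate or approximation is needed.
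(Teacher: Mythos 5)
Your argument is correct and coincides with the paper's own proof: both pass the upwards directed set $A\subset Y_+$ with supremum $b$ into $X$, use regularity of the sublattice to conclude that $b$ remains the supremum of $A$ in $X$, and then apply the Fatou property of $X$, noting that the norm on $Y$ is the restriction of that of $X$. No further comment is needed.
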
 

\begin{proof}
Suppose that $X$ has the Fatou property. Let $Y$ be a regular sublattice of $X$ and let $A$ be an upwards directed order bounded subset of $Y_+$  with supremum $b$.
$A$ is also  an upwards directed order bounded subset of $X_+$ and  $b$ is also the  supremum  of $A$ in $X_+$ ($Y$ is a regular sublattice of $X$) so using that $X$ has the Fatou property we have
$\|b\| = \sup \{\|a\|:a\in A\}.$
\end{proof}

\begin{remark}
The previous proposition is not true if we replace the Fatou property by the strong Nakano property. For example, one can consider $c$ the space of convergent sequences which has the strong Nakano property (as it is a $C(K)$ space). However, $c_0$ is a regular sublattice (even an ideal) of $c$ that fails the strong Nakano property.
\end{remark}

\subsection{Projections and the strong Nakano property}
\begin{proposition}\label{p:stability of a complemented sublattice}
Suppose $X$ is a Banach lattice with the strong Nakano property and $Y\subset X$ is a closed sublattice complemented by a regular projection $P$, then $Y$  has also the $\|P\|_r$-strong Nakano property.
\end{proposition}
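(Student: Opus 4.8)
The plan is to transport the problem to the ambient lattice $X$, produce an upper bound there via the strong Nakano property, and then carry it back into $Y$ by means of the modulus of the projection $P$.

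First I would take a norm-bounded upwards directed set $A\subset Y_+$ and put $M=\sup\{\|a\|:a\in A\}$. Because $Y$ is a closed sublattice, the order and the finite lattice operations on $A$ are computed identically in $Y$ and in $X$, so $A$ is also a norm-bounded upwards directed subset of $X_+$. The strong Nakano property of $X$ then provides $c\in X_+$, an upper bound of $A$ in $X$, with $\|c\|=M$.

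Next I would exploit that $P$ is regular. Denoting by $|P|$ its modulus, which is positive and satisfies $\||P|\|=\|P\|_r$, I propose $b:=|P|c$ as the sought bound. Since $P$ restricts to the identity on $Y$, each $a\in A$ obeys $Pa=a$, and hence $a=|a|=|Pa|\le|P|\,|a|=|P|a\le|P|c=b$, where I used the pointwise inequality $|Px|\le|P|\,|x|$, the positivity of $|P|$, and $a\le c$. On the other hand $\|b\|=\||P|c\|\le\||P|\|\,\|c\|=\|P\|_r M$. Thus $b$ dominates every $a\in A$ and has norm at most $\|P\|_r M$.

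The step I expect to be the real obstacle is showing that $b\in Y$, equivalently that $|P|$ maps $X$ into $Y$ (that is, $P|P|=|P|$). This is transparent in finite dimensions, where $X\cong\mathbb{R}^n$ is atomic: the modulus $|P|$ is the entrywise absolute value of the matrix of $P$, so its $j$-th column equals $|Pe_j|$, and since $Pe_j\in Y$ and $Y$ is a sublattice we have $|Pe_j|\in Y$; hence $\mathrm{range}(|P|)\subseteq Y$. In general the Riesz--Kantorovich formula only gives $b=|P|c=\sup\{Pw:|w|\le c\}$, a supremum of elements $Pw\in Y$ whose finite sub-suprema $\bigvee_k Pw_k$ lie in $Y$ and increase in order to $b$; the genuine difficulty is that a closed sublattice need not contain the order-suprema of its bounded increasing nets. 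I would therefore aim to deduce $\mathrm{range}(|P|)\subseteq Y$ from the idempotency $P^2=P$ together with $\mathrm{range}(P)=Y$, which is the technical heart of the statement; once this is in hand, the estimates above show that $Y$ has the $\|P\|_r$-strong Nakano property. If one prefers not to assume that the modulus exists as an operator, replacing $|P|$ by a positive operator $S$ with $-S\le P\le S$ and $\|S\|\le\|P\|_r+\varepsilon$ runs verbatim and yields the $(\lambda_+)$-strong Nakano property with $\lambda=\|P\|_r$.
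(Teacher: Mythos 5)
Your strategy is exactly the paper's: push $A$ up to an upper bound $c$ in $X$ with $\|c\|=\sup_{a\in A}\|a\|$, and return to $Y$ via $b=|P|c$, using the chain $a=Pa\le |P|a\le |P|c$ together with $\||P|c\|\le\|P\|_r\sup_{a\in A}\|a\|$. The one step you do not close --- that $b=|P|c$ actually lies in $Y$ --- is precisely the step that needs closing, and as written your proof is incomplete: you announce that you ``would aim to deduce $\mathrm{range}(|P|)\subseteq Y$ from $P^2=P$'' but give no argument, and no such deduction is available if $|P|$ is the modulus of $P$ computed in $L_r(X,X)$ via the Riesz--Kantorovich formula. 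Your own diagnosis explains why: that formula produces $|P|c$ as a supremum, taken in $X$, of an upwards directed family of elements of $Y$, and a closed sublattice need not contain suprema of its bounded increasing nets computed in the ambient lattice. Idempotency alone will not give $P|P|=|P|$.

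The way out --- and the reading under which the paper's one-line assertion $|P|(b)\in Y$ is legitimate --- is that the regularity of $P$ is regularity as an operator \emph{into} $Y$: the modulus $|P|$ is taken in $L_r(X,Y)$ (equivalently, one works with a positive operator $S:X\to Y$ satisfying $\pm P\le S$ and $\|S\|\le\|P\|_r+\varepsilon$), and $\|P\|_r$ is the regular norm there. With that convention $|P|$ maps into $Y$ by definition, there is nothing to prove, and your chain of inequalities finishes the argument verbatim, giving the $\|P\|_r$-strong Nakano property when the modulus exists (e.g.\ when $Y$ is Dedekind complete) and the $(\|P\|_r)_+$-version in general. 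Your fallback paragraph with a dominating operator $S$ is the right repair, but you must insist that $S$ take its values in $Y$; a dominating operator into $X$ inherits exactly the same membership problem. The finite-dimensional column computation you give is a correct special case but does not generalize, for the reason above.
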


\begin{proof}
Let $Y$ be a sublattice of $X$ which is complemented by a regular projection $P:X\rightarrow Y$ and let $A$ be an upwards directed norm bounded subset of $Y_+$.
By the strong Nakano property of $X$, there is $b$ in $X_+$ which is an upper bound of $A$ such that $\|b\| = \sup  \{\|a\|:a\in A\}$.
Let $c=|P|(b)\in Y$. Note that for every $a\in A$ we have
$$
a=Pa\leq|P|a\leq|P|b=c,
$$
hence, $c$ is also an upper  bound of $A$. Clearly, we have
$$
\|c\|=\||P|b\| \le \|P\|_r \sup \{\|a\|:a\in A\}.
$$
\end{proof}

As a consequence, every projection band in a Banach lattice with the strong Nakano property, must also have the strong Nakano property. However, this is no longer true for arbitrary bands: Consider in $C[0,1]$ the band $$X=\{f\in C[0,1]: f|_{[0,\frac12]}=0\}.$$ For $n\in\mathbb N$ consider the function $f_n$ such that $f_n|_{[0,\frac12]}=0$, $f_n|_{[\frac12+\frac1n,1]}=1$ and $f_n$ affine on $[\frac12,\frac12+\frac1n]$. Clearly, the sequence $(f_n)_{n\in\mathbb N}\subset X$ is increasing and norm bounded but has no upper bound in $X$.

Recall that a Banach lattice $Z$ is projective if for every Banach lattice $X$, $J\subset X$ a closed ideal, every lattice homomorphism $T:Z\rightarrow X/J$ and every $\varepsilon>0$, there exists a lattice homomorphism $\hat T:Z\rightarrow X$ such that $T=\hat T\circ Q$ (where $Q:X\rightarrow X/J$ denotes the quotient map) and $\|\hat T\|\leq (1+\varepsilon)\|T\|$.

\begin{corollary}
Every projective Banach lattice has the $1_+$-strong Nakano property.
\end{corollary}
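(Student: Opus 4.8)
The plan is to leverage the characterization of projective Banach lattices
via quotients together with the already-established fact that free Banach
lattices $FBL(A)=FBL[\ell_1(A)]$ enjoy the strong Nakano property. The key
structural input is that every projective Banach lattice arises, up to an
arbitrarily small error in the lattice-homomorphism norm, as a complemented
sublattice of a free Banach lattice. Concretely, if $Z$ is projective, then
taking $A$ a suitable generating set one obtains a quotient map
$Q:FBL(A)\to Z$ (since $Z$ is generated by a set, it is a lattice-homomorphic
image of the free object on that set) and applies the projectivity of $Z$ to
the identity $\mathrm{id}_Z:Z\to Z=FBL(A)/\ker Q$ to produce, for each
$\varepsilon>0$, a lattice homomorphism $\hat S:Z\to FBL(A)$ with
$Q\circ \hat S=\mathrm{id}_Z$ and $\|\hat S\|\leq 1+\varepsilon$.

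First I would record that this lifting exhibits $Z$ as isomorphic to the
closed sublattice $\hat S(Z)\subset FBL(A)$, with $P:=\hat S\circ Q$ a lattice
projection of $FBL(A)$ onto a copy of $Z$ satisfying $\|P\|\leq 1+\varepsilon$.
A lattice homomorphism is in particular a positive operator, so $P$ is
automatically regular with $\|P\|_r=\|P\|\leq 1+\varepsilon$. Next I would
invoke the fact, stated earlier in the excerpt, that $FBL(A)$ has the strong
Nakano property, and then apply Proposition
\ref{p:stability of a complemented sublattice}: a closed sublattice
complemented by a regular projection $P$ inside a Banach lattice with the
strong Nakano property inherits the $\|P\|_r$-strong Nakano property. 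This
yields that $\hat S(Z)$, hence $Z$ itself, has the $(1+\varepsilon)$-strong
Nakano property.

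Finally, since $\varepsilon>0$ was arbitrary, one concludes that for every
$\lambda'>1$ the lattice $Z$ has the $\lambda'$-strong Nakano property;
unwinding the definition of the $(\lambda_+)$-strong Nakano property with
$\lambda=1$, this is exactly the $1_+$-strong Nakano property. Care must be
taken that the isomorphism $\hat S$ does not distort norms: the estimate
$\|P\|_r\leq 1+\varepsilon$ is what transfers directly, while the identification
of upper bounds and their norms in $\hat S(Z)$ with those in $Z$ requires only
that $\hat S$ and $\hat S^{-1}$ (on its range) be lattice isomorphisms, which
they are. I expect the main obstacle to be purely bookkeeping: making precise
the presentation of a projective lattice as a $(1+\varepsilon)$-complemented
sublattice of an $FBL(A)$ and confirming that the constant in Proposition
\ref{p:stability of a complemented sublattice} tracks $\|P\|_r$ rather than a
larger quantity. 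Once that is in place, the conclusion is immediate.
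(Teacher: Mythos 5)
Your argument is correct and follows essentially the same route as the paper: both rest on the $(1+\varepsilon)$-complementation of a projective lattice in some $FBL(A)$ (you re-derive this from the definition of projectivity, whereas the paper cites \cite[Theorem 10.3]{dePW}), the strong Nakano property of $FBL(A)=FBL[\ell_1(A)]$ from \cite[Theorem 4.11]{ART}, and Proposition \ref{p:stability of a complemented sublattice}. The only bookkeeping point worth noting is that transferring the property from $\hat S(Z)$ back to $Z$ through the $(1+\varepsilon)$-isomorphism $\hat S$ may multiply the constant by another factor of $1+\varepsilon$, which is harmless since $\varepsilon>0$ is arbitrary.
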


\begin{proof}
By \cite[Theorem 10.3]{dePW}, for every $\varepsilon>0$ every projective Banach lattice is $(1+\varepsilon)$-isomorphic to a sublattice of $FBL(A)$, the free banach lattice generated by some set $A$, and there is a lattice homomorphism projection $P:FBL(A)\rightarrow FBL(A)$ onto this sublattice with $\|P\|\leq 1+\varepsilon$.
We know that $FBL(A)=FBL[\ell_1(A)]$  (\cite[Corollary 2.9.iii]{ART}) and $FBL[\ell_1(A)]$ has the strong Nakano proprety for every set $A$ (\cite[Theorem 4.11 ]{ART}). The conclusion follows from Proposition \ref{p:stability of a complemented sublattice}.
\end{proof}

Note that the converse of this last result is not true in general, as there exist $C(K)$ spaces which are not projective (cf. \cite{dePW}).

\begin{proposition}
If $Y$ is a Banach lattice with the strong Nakano property and $Y$ is an ideal in $X$, then it is a projection band.
\end{proposition}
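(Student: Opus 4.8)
The plan is to exhibit the band projection onto $Y$ directly, the essential point being that the strong Nakano property upgrades order boundedness in $X$ to the existence of an upper bound lying inside $Y$. Recall that $Y$ is a projection band precisely when $X=Y\oplus Y^d$, where $Y^d=\{x\in X:|x|\wedge|y|=0\text{ for all }y\in Y\}$, and, since $Y$ is an ideal, this will hold once $\sup\big(Y_+\cap[0,x]\big)$ exists in $X$ for every $x\in X_+$. So I would reduce everything to producing this supremum.

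Fix $x\in X_+$ and consider
\[
A_x=Y_+\cap[0,x]=\{y\in Y:0\le y\le x\}.
\]
Since $Y$ is a sublattice, $A_x$ is closed under finite suprema and hence upwards directed, and since the norm is monotone every $y\in A_x$ satisfies $\|y\|\le\|x\|$, so $A_x$ is norm bounded (here I use that the norm of $Y$ is the one induced from $X$). The strong Nakano property of $Y$ then furnishes an upper bound $b\in Y_+$ of $A_x$; only the existence of an upper bound inside $Y$ is needed, not the norm equality. To place this bound inside $[0,x]$, set $p=b\wedge x$: as $Y$ is an ideal and $0\le p\le b\in Y$ we get $p\in Y$, so $p\in A_x$, while every $y\in A_x$ satisfies $y\le b$ and $y\le x$, hence $y\le p$. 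Thus $p=\max A_x=\sup A_x$ in $X$, and I define $P(x)=p$.

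It remains to check that the remainder is disjoint from $Y$. Writing $x=P(x)+\big(x-P(x)\big)$ with $x-P(x)\ge0$, take any $y\in Y_+$ and put $z=\big(x-P(x)\big)\wedge y$. Then $0\le z\le y\in Y$ forces $z\in Y$, and $z\le x-P(x)$ gives $P(x)+z\in A_x$; maximality of $P(x)$ then yields $z\le0$, i.e. $z=0$. Hence $x-P(x)\in Y^d$. Splitting an arbitrary $x\in X$ as $x=x^+-x^-$ and applying this to $x^+$ and $x^-$ shows $X=Y+Y^d$, and since $Y\cap Y^d=\{0\}$ the sum is direct. Therefore $Y=Y^{dd}$ is a band with $X=Y\oplus Y^d$, so $Y$ is a projection band and $P$ is the corresponding band projection.

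The step I expect to be the crux is obtaining the upper bound $b$ inside $Y$: a priori $A_x$ is only bounded above by $x\in X$, which may fail to belong to $Y$, and it is exactly the strong Nakano property that guarantees a dominating element within $Y$. The remaining manipulations (meeting with $x$ to reach the supremum, and the disjointness computation) are routine once this bound is in hand; the only mild care required is the interpretation of the $Y$-norm as inherited from $X$ so that $A_x$ is genuinely norm bounded.
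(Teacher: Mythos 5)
Your proof is correct. The paper gives no argument of its own here --- it simply cites \cite[Proposition 12.3]{dePW} --- so there is nothing internal to compare against; your write-up is essentially the standard argument (and, as far as I can tell, the one in de Pagter--Wickstead). The key points are all handled properly: $A_x=Y_+\cap[0,x]$ is upwards directed and norm bounded, the strong Nakano property supplies an upper bound $b\in Y_+$, and passing to $p=b\wedge x$ correctly circumvents the fact that $Y$ is only assumed to be an ideal (so that an existing supremum of a subset of $Y$ need not a priori lie in $Y$): you show $p$ is actually the \emph{maximum} of $A_x$, hence lies in $Y$. The disjointness computation and the deduction $Y=Y^{dd}$ from $X=Y\oplus Y^d$ are routine and correct. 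The only caveat, which you already flag, is that the argument reads the hypothesis as saying $Y$ carries the norm induced from $X$ (so that $0\le y\le x$ gives $\|y\|_Y\le\|x\|_X$); this is the intended interpretation in the paper and in \cite{dePW}, so no issue.
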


\begin{proof}
    \cite[Proposition 12.3]{dePW}.
\end{proof}

\begin{proposition}
Suppose for every $i\in I$, $E_i$ is a Banach lattice with the $\lambda_i$-strong Nakano property. Then the direct sum $\ell_{p}(\prod_{i\in I} E_i)$ has the ($\sup_{i\in I}\lambda_i$)-strong Nakano property for $1\le p \leq\infty$.
\end{proposition}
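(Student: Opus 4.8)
The plan is to argue coordinatewise, applying the $\lambda_i$-strong Nakano property in each factor and then reassembling the resulting upper bounds into a single element of $X:=\ell_p(\prod_{i\in I}E_i)$. Write $\lambda=\sup_{i\in I}\lambda_i$, which we may assume to be finite (otherwise there is nothing to prove), and let $A\subset X_+$ be an upwards directed norm-bounded set with $M=\sup\{\|a\|:a\in A\}$. For each $i\in I$ let $\pi_i:X\to E_i$ be the canonical coordinate projection. Since the lattice operations on $X$ are computed coordinatewise, each $\pi_i$ is a positive lattice homomorphism, so $A_i:=\pi_i(A)\subset (E_i)_+$ is again upwards directed; moreover it is norm-bounded, because $\|a_i\|\le\|a\|\le M$ for every $a\in A$. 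Put $M_i=\sup\{\|a_i\|:a\in A\}$.

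First I would invoke the $\lambda_i$-strong Nakano property of $E_i$ to obtain, for each $i$, an upper bound $b_i\in (E_i)_+$ of $A_i$ with $\|b_i\|\le\lambda_i M_i\le\lambda M_i$. Setting $b=(b_i)_{i\in I}$, we have $a_i\le b_i$ for every $a\in A$ and every $i$, so $b$ dominates every element of $A$ in the coordinatewise order; what remains is to verify that $b$ genuinely lies in $X$ and carries the right norm bound.

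The key step, and the only place where directedness of $A$ (rather than mere boundedness) enters, is the inequality $\sum_{i\in I}M_i^p\le M^p$ for $p<\infty$ (respectively $\sup_{i\in I}M_i\le M$ for $p=\infty$). To prove it, fix a finite set $F\subset I$ and $\varepsilon>0$; for each $i\in F$ choose $a^{(i)}\in A$ with $\|a^{(i)}_i\|\ge M_i-\varepsilon$, and use directedness to pick $a\in A$ dominating all the $a^{(i)}$. Then $0\le a^{(i)}_i\le a_i$, so monotonicity of the lattice norm gives $\|a_i\|\ge M_i-\varepsilon$, whence $\sum_{i\in F}(M_i-\varepsilon)^p\le\sum_{i\in I}\|a_i\|^p=\|a\|^p\le M^p$. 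Letting $\varepsilon\to0$ and then taking the supremum over finite $F\subset I$ yields $\sum_{i\in I}M_i^p\le M^p$; the case $p=\infty$ is analogous, with sums replaced by suprema.

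Combining the two previous steps gives $\|b\|_p^p=\sum_{i\in I}\|b_i\|^p\le\lambda^p\sum_{i\in I}M_i^p\le\lambda^p M^p$ (and $\|b\|_\infty=\sup_i\|b_i\|\le\lambda\sup_i M_i\le\lambda M$ when $p=\infty$), so in particular $b\in X_+$ and $b$ is an upper bound of $A$ with $\|b\|\le\lambda M=\lambda\sup\{\|a\|:a\in A\}$, establishing the $\lambda$-strong Nakano property. I expect the coordinate-supremum inequality of the third step to be the main obstacle: the whole point is that directedness, funneled through monotonicity of the norm on the positive cone, lets the suprema $M_i$ be simultaneously almost-attained on a single $a\in A$. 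Once this is in hand the remaining assembly is routine bookkeeping, uniform in $p$ across $1\le p\le\infty$.
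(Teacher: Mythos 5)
Your proof is correct and follows essentially the same route as the paper's: project onto each coordinate, apply the $\lambda_i$-strong Nakano property to get $b_i$, and reassemble $b=(b_i)_{i\in I}$, with the crux being the interchange $\sum_{i}M_i^p\leq M^p$. The only difference is that you prove this interchange by hand via finite subsets, directedness and monotonicity of the norm, whereas the paper deduces the same identity from the KB (Levi/Fatou) property of $\ell_p(I)$; your version is more self-contained but not a different argument.
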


\begin{proof}
For every $k\in I$, let
$$
\begin{array}{cccc}
\pi_k&:\ell_p(\prod_{i\in I} E_i)& \longrightarrow &E_k\\
 &(x_i)_{i \in I} & \longmapsto & x_k
\end{array}
$$
be the canonical projection onto $E_k$, which is a lattice homomorphism. Let $(x_{\alpha})_{\alpha \in A}$ be an upwards directed norm bounded set in $\ell_{p}(\prod_{i\in I} E_i)_+$. For every $k\in I$, $(\pi_k(x_{\alpha}))_{\alpha \in A}$ is an upwards directed norm bounded set in $(E_k)_+$. Hence, by the $\lambda_k$-strong Nakano, there is $b_k\in E_k$ an upper bound of $(\pi_k(x_{\alpha}))_{\alpha \in A}$ with
$$
\|b_k\|_{E_k}\leq \lambda_k\sup_{\alpha\in A}\|\pi_k(x_{\alpha})\|_{E_k}. 
$$
Clearly, $b=(b_i)_{i\in I}$ is an upper bound of $(x_{\alpha})_{\alpha \in A}$ and
\begin{eqnarray*}
    \|b\|&=&\Big(\sum_{i\in I}\|b_i\|^p\Big)^{\frac1p}\leq \Big(\sum_{i\in I}\lambda_i^p\sup_{\alpha\in A}\|\pi_i(x_{\alpha})\|^p\Big)^{\frac1p}\\
    &\leq& \sup_{i\in I}\lambda_i\Big(\sum_{i\in I}\sup_{\alpha\in A}\|\pi_i(x_{\alpha})\|^p\Big)^{\frac1p}=\sup_{i\in I}\lambda_i\sup_{\alpha\in A}\Big(\sum_{i\in I}\|\pi_i(x_{\alpha})\|^p\Big)^{\frac1p}\\
    &=&\sup_{i\in I}\lambda_i\sup_{\alpha\in A}\|x_\alpha\|
\end{eqnarray*}
where the identity on the second line follows from the KB-property of $\ell_p(I)$ (with obvious changes for the case $p=\infty$).
\end{proof}

\section{Free Banach lattices and the strong Nakano property}\label{s:fbl}

It was proved in \cite{ART} that $FBL[\ell_1(A)]$ has the strong Nakano property for every index set $A$. Our motivation for the results in this section is to understand what conditions on a Banach space $E$ guarantee that $FBL[E]$ has the strong Nakano property. Similarly, suppose $FBL[E]$ has the strong Nakano property, what can we say about the underlying $E$?

We will start looking at the simpler case of free Banach lattices generated by finite dimensional spaces. Recall in this case that $FBL[E]$ can be identified with an appropriate renorming of $C(S_{E^*})$ the space of continuous functions on the unit sphere of $E^*$ (see \cite{OTTT}). However, the constant in this renorming grows without bound when the dimension of $E$ increases. This easily implies that, as long as $E$ is finite dimensional, $FBL[E]$ has the $\lambda_E$-strong Nakano property for an appropriate constant $\lambda_E$. We will show next that actually there is a universal constant $\lambda$ valid for all finite dimensional $E$.

Let us introduce the following notation.

\begin{definition}
Given $f$ in $H_1[E]$ and $k \in \mathbb{N}$, let $$\|f\|_{FBL_k[E]} = \sup\Big\{\sum\limits_{i=1}^{k}|f(x_i^*)|:\, \, x_1^*,\dots,x_{k}^*\in E^*, \,  \sup\limits_{x\in B_E} \sum\limits_{i=1}^{k} |x_i^\ast(x)|\leq 1 \Big\}.
$$

\end{definition}

Obviously, for every $f\in H_1[E]$, $(\|f\|_{FBL_k[E]})_{k\in\mathbb N}$ is a non-decreasing sequence and we have
$$
\|f\|_{FBL[E]}=\sup\limits_{k \in \mathbb{N}}\|f\|_{FBL_k[E]}.
$$
    
The following is a consequence of the work of \cite{SZ} for estimating the summing norm of finite dimensional operators with  few vectors:

\begin{lemma}\label{l:majoration of the free norm}
 There exist a universal constant $\lambda_{fin}$ and a sequence of integers $(k_m)_{m \in \mathbb{N}}$ such that for every finite dimensional Banach space $E$  we have:
$$\|f\|_{FBL[E]}\le \lambda_{fin} \|f\|_{FBL_{k_{dim(E)}}[E]},
$$
for every $f$ in $H_1[E].$ 
\end{lemma}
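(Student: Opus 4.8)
## Proof Strategy for Lemma \ref{l:majoration of the free norm}

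The plan is to reduce the computation of $\|f\|_{FBL[E]}$, which involves a supremum over arbitrarily many vectors $x_1^*,\dots,x_m^*$, to one involving a bounded number of vectors depending only on $\dim(E)$, at the cost of a universal multiplicative constant. The key observation is that the defining expression for $\|f\|_{FBL[E]}$ is precisely a supremum of quantities $\sum_{i=1}^m |f(x_i^*)|$ subject to the constraint $\sup_{x\in B_E}\sum_{i=1}^m|x_i^*(x)|\leq 1$, and this constraint says exactly that the formal sum $\sum_i |x_i^*(\cdot)|$ has norm at most $1$ as a positively homogeneous function; equivalently, the family $(x_i^*)$ defines a $1$-summing-type condition relative to the identity on $E$. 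I would first make this connection precise: estimating $\|f\|_{FBL[E]}$ amounts to controlling the $\pi_1$-summing norm of certain operators through the supremum norm, and the theory developed in \cite{SZ} bounds such summing norms of operators on an $n$-dimensional space using only a bounded number $k_n$ of vectors.

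Concretely, I would proceed as follows. Fix $f\in H_1[E]$ and suppose $\|f\|_{FBL[E]}>c$ for some $c>0$. By definition there exist $m\in\mathbb N$ and $x_1^*,\dots,x_m^*\in E^*$ with $\sup_{x\in B_E}\sum_{i=1}^m|x_i^*(x)|\leq 1$ and $\sum_{i=1}^m|f(x_i^*)|>c$. The goal is to replace this configuration of $m$ vectors by one with at most $k_{\dim(E)}$ vectors, losing only a factor $\lambda_{fin}$. The essential input is the finite-dimensional estimate of \cite{SZ}: for an operator $u$ on an $n$-dimensional normed space, its $1$-summing norm can be approximated within a universal constant by testing against a number of vectors $k_n$ that depends only on $n$ (and not on $u$ or on the ambient structure). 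I would encode the constraint $\sup_{x\in B_E}\sum_{i=1}^m|x_i^*(x)|\leq 1$ as a summing condition on the formal operator $E\to\ell_1^m$ sending $x\mapsto (x_i^*(x))_i$, and then invoke the \cite{SZ} result to extract a subfamily (or a new family) of size $k_n$, where $n=\dim(E)$, that realizes the same sum up to the factor $\lambda_{fin}$ while still satisfying the normalization constraint.

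The main obstacle I anticipate is the precise translation between the free-norm constraint and the summing-norm formalism of \cite{SZ}, together with ensuring that the reduction preserves \emph{both} the constraint $\sup_{x\in B_E}\sum|x_i^*(x)|\leq1$ \emph{and} a lower bound on $\sum|f(x_i^*)|$ simultaneously. One must be careful that $f$ is only positively homogeneous, not linear, so the quantities $f(x_i^*)$ cannot be manipulated via linearity; the reduction has to act on the \emph{indexing vectors} $x_i^*$ in $E^*$ while treating $f$ as a fixed nonlinear weight. I would handle this by phrasing the extraction purely in terms of the geometry of the points $x_i^*$ in $E^*$ (an $n$-dimensional space), so that the Sobczyk--Zsák type bound applies to the configuration itself and the values $f(x_i^*)$ ride along. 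Once the sharp cardinality bound $k_n=k_{\dim(E)}$ with universal constant $\lambda_{fin}$ is secured, taking the supremum over all admissible configurations and then over $c<\|f\|_{FBL[E]}$ yields the stated inequality $\|f\|_{FBL[E]}\leq\lambda_{fin}\|f\|_{FBL_{k_{\dim(E)}}[E]}$.
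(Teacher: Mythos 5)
Your strategy matches the paper's proof essentially exactly: both reduce an arbitrary admissible configuration $x_1^*,\dots,x_N^*$ to one of size $k_{\dim(E)}$ by applying Szarek's few-vectors lemma (\cite[Lemma 6]{SZ}) to the functionals in $E^*$ with the values $|f(x_i^*)|$ playing the role of fixed weights, and both rely on the positive homogeneity of $f$ to let the extracted scalars be absorbed into rescaled test vectors while preserving the normalization constraint. Your hedge ``a subfamily (or a new family)'' is resolved in the paper precisely as you anticipate --- the lemma yields a subset $\sigma$ with $|\sigma|\le k_n$ together with scalars $\mu_k$, and the new test vectors are the rescaled $C|\mu_k|x_k^*$ --- so the outline is correct and follows the same route.
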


\begin{proof}
Let $f\in H_1[E]$, $n=dim(E)$ and pick $x_1^*,\dots,x_{N}^*\in E^*$ (with arbitrary $N \in \mathbb{N}$) such that
$$\sup_{x\in B_E} \sum_{k=1}^{N} |x_k^\ast(x)|\leq 1.$$
Set $a=\sum_{k=1}^{N} |f(x_k^*)|.$
By \cite[Lemma 6]{SZ} there exists $k_n$ a positive integer which depends only on $n$, so that the statement of \cite[Lemma 6]{SZ} applied to $E^*$ with $w_k=|f(\frac{x_k^*}{a})|$, yields $\sigma \subset \{1,\dots,N\}$ and scalars $(\mu_k)_{k \in \sigma }$ such that:
\begin{enumerate}
\item $|\sigma|\le k_n$,
\item $\sup_{x\in B_E} \sum_{k \in \sigma} |\mu_k x_k^*(x)|\le 1/C,$
\item $\sum_{k \in \sigma} w_k \mu_k \ge C,$
\end{enumerate}
where $C$ is a universal constant.

It follows that
\begin{align*}
  \ C^2&\le \sum_{k \in \sigma} C w_k \mu_k = \sum_{k \in \sigma} C|f(\frac{x_k^*}{a})| \mu_k\\
  &\le \frac1a\sum_{k \in \sigma} |f(C x_k^*)| |\mu_k|
  =\frac1a\sum_{k \in \sigma} |f(C x_k^* |\mu_k|)|
  \le \frac1a \|f\|_{FBL_{k_n}[E]}.
\end{align*}
Thus,
$\sum_{k=1}^{N} |f(x_k^*)| C^2\le  \|f\|_{FBL_{k_n}[E]}$. Since $(x_k^*)_{k=1}^N$ are arbitrary, setting $\lambda_{fin}=C^{-2}$ we get
$$\|f\|_{FBL[E]}\le \lambda_{fin} \|f\|_{FBL_{k_n}[E]}.$$
\end{proof}

\begin{lemma}\label{l:existence of upper bound}
Let $E$ be a Banach space of finite dimension. For every $k\in\mathbb N$, every $\varepsilon>0$ and every $f\in H_1[E]_+$ there is $g\in FBL[E]$ such that
\begin{enumerate}
    \item $f\le g$,
    \item $\|g\|_{FBL_{k}[E]}\le (1+\varepsilon) \|f\|_{FBL_{k}[E]}.$
\end{enumerate}
\end{lemma}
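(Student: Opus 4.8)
The plan is to exploit the fact that, since $E$ is finite dimensional, $FBL[E]$ is (as a vector lattice) just the space of continuous positively homogeneous functions on $E^*$, i.e.\ a renorming of $C(S_{E^*})$ (see \cite{OTTT}). Thus producing $g\in FBL[E]$ amounts to producing a \emph{continuous} positively homogeneous majorant of $f$ whose $\|\cdot\|_{FBL_k[E]}$-norm is barely larger than that of $f$. The natural candidate is a Lipschitz regularization of $f$ from above. Writing $m=\|f\|_{FBL_k[E]}$, I would fix a parameter $L\geq m$ and set, for $x^*\in E^*\setminus\{0\}$,
\[
g(x^*)=\|x^*\|\,\sup_{y^*\in S_{E^*}}\Big(f(y^*)-L\,\big\|\tfrac{x^*}{\|x^*\|}-y^*\big\|\Big),\qquad g(0)=0.
\]
Here I use that $f$ is bounded on $S_{E^*}$, since $\|f\|_{FBL_1[E]}=\sup_{S_{E^*}}|f|\leq m$.

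First I would record the easy properties. Taking $y^*=x^*/\|x^*\|$ in the supremum shows $f\leq g$; the inner expression is a supremum of uniformly $L$-Lipschitz functions of $x^*/\|x^*\|$, so it is $L$-Lipschitz on $S_{E^*}$, and its positively homogeneous extension $g$ is therefore continuous on $E^*$, whence $g\in FBL[E]$ by the identification above. This settles condition (1) and membership in $FBL[E]$.

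The real work is the estimate (2), and here is where the main obstacle lies: controlling how the regularization interacts with the constraint defining $\|\cdot\|_{FBL_k[E]}$. Fix a configuration $x_1^*,\dots,x_k^*\in E^*$ with $\sup_{x\in B_E}\sum_i|x_i^*(x)|\leq1$, and write $x_i^*=r_i u_i^*$ with $u_i^*\in S_{E^*}$. For each $i$ choose, up to an error $\eta>0$, a near-optimal $y_i^*\in S_{E^*}$ in the supremum defining $g(u_i^*)$, and set $z_i^*=r_i y_i^*$ and $D=\sum_i r_i\|u_i^*-y_i^*\|$. Two observations drive the argument: positive homogeneity gives $g(x_i^*)\leq f(z_i^*)-L r_i\|u_i^*-y_i^*\|+r_i\eta$; and the displaced configuration almost satisfies the constraint, namely $\sup_{x\in B_E}\sum_i|z_i^*(x)|\leq 1+D$, so that rescaling by $1+D$ and invoking the definition of $m$ yields $\sum_i f(z_i^*)\leq(1+D)m$. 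Summing over $i$ and using the elementary bound $\sum_i\|x_i^*\|\leq k$ (each $\|x_i^*\|\leq1$, since $|x_i^*(x)|\leq\sum_j|x_j^*(x)|\leq1$ on $B_E$) gives
\[
\sum_{i=1}^k g(x_i^*)\leq (1+D)m-LD+k\eta = m+D(m-L)+k\eta.
\]

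To finish I would choose $L=m$ so that the term $D(m-L)$ is nonpositive, and then take $\eta$ small enough that $k\eta\leq\varepsilon m$ (the degenerate case $m=0$ forces $f\equiv0$, where one takes $g=0$). Since the configuration was arbitrary, this bounds $\|g\|_{FBL_k[E]}\leq(1+\varepsilon)m$, which is (2). The delicate point throughout is the balance in the last display: a large displacement $D$ both enlarges the admissible constraint (the $+Dm$ gain) and is penalized by the Lipschitz term ($-LD$), and the choice $L\geq m$ is exactly what makes the penalty dominate; the uniform bound $\|x_i^*\|\leq1$ is what keeps the aggregate slack $\sum_i r_i\eta$ under control independently of the configuration.
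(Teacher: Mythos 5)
Your argument is correct, and it reaches the conclusion by a genuinely different construction than the paper's. The paper builds the majorant $g$ by covering the compact sphere $S_{E^*}$ with finitely many $\delta$-balls, taking the local suprema of $f$ over slightly enlarged balls, gluing these into continuous bumps via Tietze's theorem, and letting $g$ be their finite supremum; the key perturbation estimate (for each $x^*$ there is $v^*$ with $\|x^*-v^*\|\le 4\delta$ and $g(x^*)\le f(v^*)+\delta$) then yields $\|g\|_{FBL_k[E]}\le(4k\delta+1)\|f\|_{FBL_k[E]}+k\delta$, and $\delta$ is chosen small at the end. You instead take the Lipschitz (sup-convolution) majorant with constant $L=\|f\|_{FBL_k[E]}$, and the configuration estimate is structurally the same — displace each $x_i^*$ to $z_i^*=r_iy_i^*$, absorb the inflated constraint $1+D$ by homogeneity, and control the aggregate displacement via $\sum_i\|x_i^*\|\le k$ — except that the Lipschitz penalty $-LD$ exactly cancels the constraint inflation $+Dm$. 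Both proofs use finite-dimensionality in the same two places: boundedness/compactness considerations on $S_{E^*}$ and, crucially, the identification of $FBL[E]$ with the continuous positively homogeneous functions on $E^*$ (needed to conclude $g\in FBL[E]$ from mere continuity). What your route buys is a cleaner quantitative outcome: since $\eta$ is arbitrary for each fixed configuration, your display actually gives $\sum_i g(x_i^*)\le m$ outright, i.e.\ $\|g\|_{FBL_k[E]}=\|f\|_{FBL_k[E]}$ with no $\varepsilon$ at all, which is stronger than the stated lemma and would slightly simplify the deduction of the theorem that follows it. Two minor points you should make explicit in a write-up: discard indices with $x_i^*=0$ before normalizing, and note that the supremum defining $g$ need not be attained (as $f\in H_1[E]_+$ can be discontinuous), which is exactly why the $\eta$-near-optimal choice is the right formulation.
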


\begin{proof}
Let $\delta> 0$ small enough to be chosen later. As $E$ has finite dimension, $S_{E^*}$ is compact, so there exist $y_1^*,\ldots,y_p^*$ in $S_{E^*}$ such that
$$
S_{E^{\ast }}=\bigcup\limits_{i=1}^{p}B\left(
y_{i}^{\ast },\delta\right),
$$
where $B\left(
y_{i}^{\ast },\delta\right)=\{x^* \in S_{E^*}:\,\|y_{i}^{\ast }-x^{\ast }\|\le \delta \}$.

By Tietze's Theorem, for every $1\leq i\leq p$ we can find $g_{i}\in C\left( S_{E^{\ast }}\right) $
satisfying $g_{i}=0$ on $S_{E^{\ast }}\backslash B\left( y_i^{\ast
}, 2\delta \right)$,  $g_{i}^{\ast }=\sup\limits_{x^{\ast }\in B\left( y_i^{\ast },2\delta\right)}f\left( x^{\ast }\right)$ on $B\left( y_i^{\ast },\delta\right)$, and with values between these two elsewhere.

Let $g=\bigvee\limits_{i=1}^{p}g_{i}$. Clearly, we have $g\in FBL[E]$ and $0\le f \le g$. Indeed, for $x^*$ on $S_{E^*}$ there is $i$ such that $x^* \in B\left( y_i^{\ast },\delta\right)$ and so
$$
f(x^*) \le \sup\limits_{y^*\in B\left( y_i^{\ast },2\delta\right)}f\left( y^{\ast }\right)= g_i(x^*) \le g(x^*).
$$

Now, let $x^*$ in $S_{E^*}$. By definition, there is $i\in \{1,\ldots,p\}$ such that $g(x^*)=g_{i}(x^*)$. We want to show that there exists $v^*$  such that
\begin{enumerate}
    \item $g(x^*)\le f(v^*) + \delta$,
    \item $\|x^*-v^*\| \le 4\delta$.
\end{enumerate}
Indeed, we have $g(x^*)=g_{i}(x^*)\le \sup\limits_{y^*\in B\left( y_i^{\ast },2\delta\right)}f\left( y^{\ast }\right)$, so one can find $v^*$ in $B\left( y_i^{\ast
},2\delta \right)$ with $g(x^*)=g_{i}(x^*)\le f(v^*) + \delta$.
As the case $g(x^*)=0$ is trivial, we can assume that $g(x^*)=g_i(x^*)>0$, which implies $x^*$ is in $B\left( y_i^{\ast},2\delta \right)$, so we get
$\|x^*-v^*\| \le 4\delta.$

Now, we claim that for $k\in\mathbb N$, $\varepsilon>0$, choosing $\delta > 0$ small enough we have that $$
\|g\|_{FBL_{k}[E]} \le (1+\epsilon) \|f\|_{FBL_{k}[E]}.
$$
To check this, fix $x_{1}^{\ast },\ldots,x_{k}^{\ast }$ in $E^{\ast }$ satisfying $\sup\limits_{x\in B_{E}}\sum\limits_{i=1}^{k}\left\vert x_{i}^{\ast }\left(
x\right) \right\vert \leq 1.$ For each $1\leq i\leq k$, choose as above  $v_{i}^{\ast }$ such that $\|\dfrac{x_{i}^{\ast }}{\left\Vert x_{i}^{\ast
}\right\Vert }-v_i^*\| \le 4\delta$ and $g(\dfrac{x_{i}^{\ast }}{\left\Vert x_{i}^{\ast
}\right\Vert })\le f(v_i^*) + \delta$. Let $t_{i}^{\ast }=\left\Vert x_{i}^{\ast }\right\Vert v^*_{i}.
$
 
We have that 
\begin{eqnarray*}
    \sum\limits_{i=1}^{k}g\left( x_{i}^{\ast }\right)
 &=&\sum\limits_{i=1}^{k}\left\Vert x_{i}^{\ast }\right\Vert
 g\left( \dfrac{x_{i}^{\ast }}{\left\Vert x_{i}^{\ast }\right\Vert 
}\right)\\
&\leq& \sum\limits_{i=1}^{k}\left\Vert x_{i}^{\ast }\right\Vert \left(
 f\left( v_{i}^{\ast }\right)  +\delta \right)
=\sum\limits_{i=1}^{k}\left( f\left( t_{i}^{\ast }\right)
 +\left\Vert x_{i}^{\ast }\right\Vert \delta \right) \\
&\leq& \sum\limits_{i=1}^{k}  f\left( t_{i}^{\ast }\right)+k\delta \leq \left\Vert f\right\Vert
_{FBL_{k}[E]}\sup\limits_{x\in B_{E}}\sum\limits_{i=1}^{k}\left\vert
t_{i}^{\ast }\left( x\right) \right\vert +k\delta \\
&\leq&  \|f\|_{FBL_{k}[E]} (4k\delta+1)+k\delta.
 \end{eqnarray*}

For $\epsilon >0$ and $k\in \mathbb N$ we can choose $\delta$ such that $\|g\|_{FBL_{k}[E]} \le (1+\epsilon) \|f\|_{FBL_{k}[E]}.$
\end{proof}

\begin{theorem}
For every finite dimensional Banach space $E$, $FBL[E]$ has the $(\lambda_{fin})_+$-strong Nakano property (where $\lambda_{fin}$ is the universal constant coming from Lemma \ref{l:majoration of the free norm}).
\end{theorem}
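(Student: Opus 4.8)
The plan is to produce the required upper bound by passing to the pointwise supremum of the directed set inside $H_1[E]$ and then pulling it back into $FBL[E]$ with the help of the two preceding lemmas. First I would fix a norm-bounded upwards directed set $A\subset FBL[E]_+$, set $M=\sup\{\|a\|_{FBL[E]}:a\in A\}$, write $n=\dim(E)$ and $k=k_n$ for the integer supplied by Lemma \ref{l:majoration of the free norm}, and define $h(x^*)=\sup_{a\in A}a(x^*)$ for $x^*\in E^*$. Since the elements of $A$ are nonnegative and positively homogeneous, so is $h$; the single-vector bound $|a(x^*)|\leq\|x^*\|\,\|a\|_{FBL[E]}\leq\|x^*\|M$ shows $h$ is real-valued; and by construction $a\leq h$ pointwise for all $a\in A$, so $h$ is an upper bound of $A$ in the pointwise (hence lattice) order of $H_1[E]$.

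The crucial step would be the estimate $\|h\|_{FBL_k[E]}\leq M$, and this is exactly where upwards directedness is used. Given admissible $x_1^*,\dots,x_k^*$ (with $\sup_{x\in B_E}\sum_{i=1}^k|x_i^*(x)|\leq1$) and $\varepsilon'>0$, I would pick $a_i\in A$ with $a_i(x_i^*)>h(x_i^*)-\varepsilon'$ and then, using that $A$ is directed and the index set is \emph{finite}, a single $a\in A$ dominating all the $a_i$, so that
$$
\sum_{i=1}^k h(x_i^*)<\sum_{i=1}^k a(x_i^*)+k\varepsilon'\leq\|a\|_{FBL_k[E]}+k\varepsilon'\leq M+k\varepsilon'.
$$
Letting $\varepsilon'\to0$ and taking the supremum over admissible tuples yields $\|h\|_{FBL_k[E]}\leq M$, and then Lemma \ref{l:majoration of the free norm} gives $\|h\|_{FBL[E]}\leq\lambda_{fin}M<\infty$; in particular $h\in H_1[E]_+$.

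To finish, given $\lambda'>\lambda_{fin}$ I would choose $\varepsilon>0$ with $(1+\varepsilon)\lambda_{fin}\leq\lambda'$ and apply Lemma \ref{l:existence of upper bound} to $h$, $k$ and $\varepsilon$, obtaining $g\in FBL[E]$ with $h\leq g$ and $\|g\|_{FBL_k[E]}\leq(1+\varepsilon)M$. Then $g\in FBL[E]_+$ (as $g\geq h\geq0$) is an upper bound of $A$ (since $a\leq h\leq g$ for all $a$), and a final application of Lemma \ref{l:majoration of the free norm} gives
$$
\|g\|_{FBL[E]}\leq\lambda_{fin}\|g\|_{FBL_k[E]}\leq(1+\varepsilon)\lambda_{fin}M\leq\lambda'M,
$$
which is precisely the $(\lambda_{fin})_+$-strong Nakano property.

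As for the difficulty: the two lemmas already carry all the analytic weight, so the only genuinely new point is the finite-vector estimate $\|h\|_{FBL_k[E]}\leq M$. Its whole force rests on the fact that $\|\cdot\|_{FBL_k[E]}$ tests only $k_n$ functionals, so directedness produces a single dominating element of $A$; the passage to the full norm is then handled entirely by Lemma \ref{l:majoration of the free norm}. I note that this argument treats arbitrary directed sets directly, so there is no need to invoke separability and Lemma \ref{l:separable strong Nakano} to reduce to sequences.
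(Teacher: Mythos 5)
Your argument is correct and shares its backbone with the paper's proof: form the pointwise supremum of the family, dominate it by an element of $FBL[E]$ via Lemma \ref{l:existence of upper bound}, and upgrade the $k_{\dim(E)}$-vector estimate to a full norm estimate via Lemma \ref{l:majoration of the free norm}. Where you genuinely diverge is in handling the directed set: the paper first reduces to increasing sequences through Lemma \ref{l:separable strong Nakano} (using separability of $FBL[E]$) and then quotes \cite[Lemma 4.2]{ART} for the identity $\|\sup_n f_n\|_{FBL[E]}=\sup_n\|f_n\|_{FBL[E]}$, whereas you prove the needed bound $\|h\|_{FBL_k[E]}\le M$ directly from upwards directedness, exploiting that only finitely many test functionals appear. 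Your route is self-contained and avoids both the separability reduction and the external citation, at no cost in the constant. One small wrinkle to tidy up: Lemma \ref{l:majoration of the free norm} is stated for $f\in H_1[E]$, and at the moment you apply it to $h$ you have only established $\|h\|_{FBL_k[E]}\le M$, not yet $h\in H_1[E]$, so the application is formally circular. This is harmless and fixable in either of two ways: observe that the proof of that lemma bounds each finite sum $\sum_{i=1}^N|f(x_i^*)|$ by $\lambda_{fin}\|f\|_{FBL_{k_n}[E]}$ for an arbitrary positively homogeneous $f$ without ever using finiteness of $\|f\|_{FBL[E]}$; or note that your own directedness argument, run with an arbitrary number $N$ of admissible functionals instead of $k$, already yields $\|h\|_{FBL[E]}\le M$ directly (which is precisely the content of \cite[Lemma 4.2]{ART}). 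With that remark inserted, the proof is complete.
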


\begin{proof}
By Lemma \ref{l:separable strong Nakano} it is enough to check the strong Nakano property for sequences. Let $(f_n)_{n\in\mathbb{N}}$ be an increasing norm bounded sequence in $FBL[E]_+$ with $\sup_{n\in\mathbb{N}} \|f_n\|_{FBL[E]}=1$. Define $f$ on $E^*$ by $f(x^*):=\sup_{n\in\mathbb{N}}\{f_n(x^*)\}$ for every $x^*$ in $E^*$. By \cite[Lemma 4.2]{ART} we have $f\in H_1[E]$ with $\|f\|_{FBL[E]}=1$.

By Lemma \ref{l:existence of upper bound} there is $g$ is in $FBL[E]$ such that $f\le g$ and $\|g\|_{FBL_{k_{dim(E)}}[E]}\le (1+\varepsilon) \|f\|_{FBL_{k_{dim(E)}}[E]}$.

Therefore, $g$ is an upper bound of  $(f_n)_{n\in\mathbb{N}}$ and by Lemma \ref{l:majoration of the free norm} we have
\begin{align*}
\|g\|_{FBL[E]}&\le \lambda_{fin} \|g\|_{FBL_{k_{dim(E)}}[E]}\\
&\le (1+\epsilon)\lambda_{fin} \|f\|_{FBL[E]}=(1+\epsilon)\lambda_{fin}.
\end{align*}
This completes the proof.
\end{proof}

The rest of this section will be devoted to show some conditions that a Banach space $E$ must satisfy in case $FBL[E]$ has the strong Nakano property.

Let us first recall that a subspace $F$ of a Banach space $E$ is called an ideal (cf.~\cite{GKS}) if the subspace 
$$
F^\perp=\{x^*\in E^*:x^*(y)=0 \text{ for }y\in F\}
$$ 
is the kernel of a contractive projection on $E^*$. This notion should not be confused with that of an ideal in a Banach lattice.

\begin{proposition}
Suppose $E$ is a Banach space such that $FBL[E]$ has the $\lambda$-strong Nakano property for some $\lambda$. Then $E$ cannot contain a subspace isomorphic to $c_0$.
\end{proposition}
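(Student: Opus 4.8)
The plan is to argue by contraposition: assuming $E$ contains a subspace isomorphic to $c_0$, I will show that $FBL[E]$ fails the $\lambda$-strong Nakano property for every $\lambda$. The strategy rests on two independent ingredients. First, an explicit witness that $FBL[c_0]$ itself fails the strong Nakano property. Second, a transfer mechanism showing that $FBL[c_0]$ embeds into $FBL[E]$ as a sublattice complemented by a regular projection of norm essentially $1$, so that Proposition \ref{p:stability of a complemented sublattice} lets me pull the (hypothetical) $\lambda$-strong Nakano property of $FBL[E]$ down to $FBL[c_0]$, reaching a contradiction. Since $FBL[c_0]$ will fail the property for \emph{all} constants, any finite blow-up of $\lambda$ in the transfer is harmless.

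For the witness in $FBL[c_0]$, let $(e_k)$ be the unit vector basis and consider $f_n=\bigvee_{k=1}^n|\delta_{e_k}|\in FBL[c_0]_+$, that is, the function $\phi\mapsto\max_{k\le n}|\phi(e_k)|$ on $c_0^*=\ell_1$. This sequence is increasing and positive. I claim $\|f_n\|_{FBL[c_0]}=1$ for all $n$: the bound $\ge1$ is clear since $f_n\ge|\delta_{e_1}|$, while for the reverse one tests against an admissible family $(\phi^{(i)})$ and uses the elementary estimate $\mathbb{E}_\varepsilon|\sum_{k\le n}\varepsilon_k\phi^{(i)}(e_k)|\ge\max_{k\le n}|\phi^{(i)}(e_k)|$ over random signs $\varepsilon$, together with $\|\sum_{k\le n}\varepsilon_k e_k\|_{c_0}=1$, to get $\sum_i f_n(\phi^{(i)})\le\mathbb{E}_\varepsilon\sum_i|\phi^{(i)}(\sum_k\varepsilon_k e_k)|\le1$. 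Thus $(f_n)$ is norm-bounded and increasing. On the other hand, it has no upper bound in $FBL[c_0]_+$: applying the lattice homomorphism $Q\colon FBL[c_0]\to c_0$ furnished by the freeness of $FBL[c_0]$ applied to $\mathrm{id}_{c_0}$ (so $Q\delta_x=x$), one finds $Qf_n=\bigvee_{k\le n}e_k=s_n$, the summing-basis vector. Any upper bound $g$ of $(f_n)$ would then give $Qg\ge s_n$ for all $n$, forcing $Qg\ge(1,1,\dots)\notin c_0$. Hence $FBL[c_0]$ fails the strong Nakano property, and a fortiori the $\lambda$-strong Nakano property for every $\lambda$.

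For the transfer I will use that $c_0$ is an ideal in $E$. Choosing norm-one Hahn–Banach extensions $\tilde e_m^*\in E^*$ of the coordinate functionals yields a contractive extension operator $T\colon\ell_1=c_0^*\to E^*$, $T\psi=\sum_m\psi_m\tilde e_m^*$, with $RT=\mathrm{id}$ for $R=\iota^*$ the restriction. The pullbacks $R^*=FBL[\iota]\colon FBL[c_0]\to FBL[E]$ and $T^*\colon FBL[E]\to FBL[c_0]$ (composition with $T$) are lattice homomorphisms satisfying $T^*R^*=\mathrm{id}$, so $\Pi=R^*T^*$ is a regular lattice-homomorphism projection onto a sublattice $Y\cong FBL[c_0]$. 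The crucial point is $\|T^*\|\le1$, which unwinds to $\|(T\psi_i)_i\colon E\to\ell_1\|\le\|(\psi_i)_i\colon c_0\to\ell_1\|$; this holds because each $T\psi_i$ lies in the predual copy $\ell_1\subset E^*$ and is therefore weak*-continuous, while $B_{c_0}$ is weak*-dense in $B_{\ell_\infty}=B_{c_0^{**}}$. Consequently $\Pi$ is a contractive regular projection, and Proposition \ref{p:stability of a complemented sublattice} forces $FBL[c_0]\cong Y$ to inherit the $\lambda$-strong Nakano property from $FBL[E]$, contradicting the previous paragraph. (When the copy of $c_0$ is only isomorphic, the same argument runs with the loss of an unimportant finite multiplicative constant throughout.)

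I expect the transfer estimate $\|T^*\|\le1$ to be the main obstacle. Since $c_0$ need not be complemented in $E$ (for instance $E=\ell_\infty$), one cannot produce a norm-one projection $E\to c_0$ and transfer through an honest lattice quotient; instead one must exploit the weak*-density of $c_0$ in its bidual to control the way the functional-extension operator $T$ interacts with the summing-type norm defining $\|\cdot\|_{FBL[E]}$.
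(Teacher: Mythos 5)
Your first step is essentially correct and matches the spirit of the paper's witness: the sequence $f_n=\bigvee_{k\le n}|\delta_{e_k}|$ is increasing, the Rademacher-averaging estimate does give $\|f_n\|_{FBL[c_0]}=1$, and applying the lattice homomorphism $Q=\widehat{\mathrm{id}_{c_0}}$ shows that an upper bound in $FBL[c_0]$ would force an element of $c_0$ to dominate the summing basis. (The paper phrases the same witness through the embedding $\alpha:c_0\to FBL[c_0]$ of \cite{AMRT}.)

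The transfer step, however, has a genuine gap. Precomposition with $T$ does satisfy the norm estimate $\|f\circ T\|_{H_1[c_0]}\le\|f\|_{H_1[E]}$ --- your Goldstine argument is fine --- but it maps $FBL[E]$ into $H_1[c_0]$, \emph{not} into $FBL[c_0]$. Indeed $T=S^*$ where $S:E\to\ell_\infty$ is given by $Sy=(\tilde e_m^*(y))_m$, and $T$ is weak*-to-weak* continuous only when $S$ takes values in $c_0$, i.e.\ essentially only when $c_0$ is complemented in $E$. Since every element of $FBL[c_0]$ is weak*-continuous on bounded subsets of $\ell_1$ (cf.\ \cite[Remark 2.5]{OTTT}), the function $T^*\delta_y=\langle\,\cdot\,,Sy\rangle$ fails to lie in $FBL[c_0]$ whenever $Sy\in\ell_\infty\setminus c_0$: take $E=\ell_\infty$, $\tilde e_m^*$ the coordinate functionals and $y=(1,1,\dots)$ for a concrete failure. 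Hence $\Pi=R^*T^*$ is not a regular projection of $FBL[E]$ onto a copy of $FBL[c_0]$, and Proposition \ref{p:stability of a complemented sublattice} cannot be invoked. This is not a removable technicality: your sequence $(f_n)$ \emph{does} admit an upper bound of norm $1$ in $H_1[c_0]$, namely $\psi\mapsto\|\psi\|_{\ell_\infty}$, so producing an upper bound merely in $H_1[c_0]$ yields no contradiction at all --- the entire force of the argument lies in landing inside $FBL[c_0]$. The paper gets around the non-complementability of $c_0$ differently: given the hypothetical upper bound $f\in FBL[E]$, it first localizes $f$ into $FBL[F]$ for a \emph{separable} ideal $F\subset E$ containing $j(c_0)$ (using \cite{SY} together with the fact that $\|\cdot\|_{FBL[F]}=\|\cdot\|_{FBL[E]}$ on $FBL[F]$ for ideals), and only then applies Sobczyk's theorem \emph{inside the separable space} $F$ to obtain an honest bounded projection $P:F\to j(c_0)$, whose functorial extension is a genuine lattice homomorphism carrying $f$ into $FBL[j(c_0)]$. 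Some such separable reduction is what your dual-side extension operator would need to be replaced by.
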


\begin{proof}
Suppose that $j:c_0\rightarrow E$ is an isomorphic embedding. By \cite[Corollary 4.12]{OTTT} the extension $\overline{j}:FBL[c_0]\rightarrow FBL[E]$ is a lattice isomorphic embedding. By \cite[Theorem 4.1]{AMRT}, there is a lattice embedding $\alpha:c_0\rightarrow FBL[c_0]$ such that $\beta\alpha=id_{c_0}$, where $\beta=\widehat{id_{c_0}}:FBL[c_0]\rightarrow c_0$ is the unique lattice homomorphism extending the identity on $c_0$, $id_{c_0}$.

Let $s_n$ denote the summing basis in $c_0$, and $f_n=\overline{j}\alpha s_n\in FBL[E]$. Clearly, $(f_n)_{n\in\mathbb N}$ is an increasing sequence with $\sup_{n\in\mathbb N} \|f_n\|\leq\|\alpha\|\|j\|$. Since $FBL[E]$ has the $\lambda$-strong Nakano property, there is $f\in FBL[E]$ such that $f_n\leq f$ for every $n\in\mathbb N$ and $\|f\|\leq \lambda \|\alpha\|\|j\|$.

Using the density of $FVL[E]=lat(\delta_E(E))$ in $FBL[E]$, we can find a separable subspace $F\subset E$ such that $f\in FBL[F]$. Enlarging $F$ if needed we can assume without loss of generality that $j(c_0)\subset F$ and that $F$ is an ideal in $E$ (the latter due to \cite{SY}). In particular, by \cite[Corollary 4.14]{OTTT} we have $\|f\|_{FBL[F]}=\|f\|_{FBL[E]}$.

By Sobczyk's theorem there is a projection $P:F\rightarrow j(c_0)$. Let us consider $x=\beta\overline{j^{-1}}\overline{P}f\in c_0$. Observe that for every $n\in\mathbb N$ we have
$$
s_n=\beta\alpha s_n=\beta\overline{j^{-1} P j}\alpha s_n=\beta \overline{j^{-1}}\overline{P}f_n\leq \beta \overline{j^{-1}}\overline{P}f=x. 
$$
This is a contradiction as $x$ would be larger than the constant 1 sequence, hence $x\notin c_0$.
\end{proof}

It was also shown in \cite[Theorem 4.13]{ART} that $FBL[L_1[0,1]]$ fails the Fatou property, hence strong Nakano. We can improve this further as follows:

\begin{theorem}
If $FBL[E]$ has the $\lambda$-Fatou property for some $\lambda$, then $E$ does not contain a closed subspace isomorphic to $L_1[0,1]$.
\end{theorem}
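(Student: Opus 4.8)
The plan is to reduce the statement to the (known) failure of the Fatou property for $FBL[L_1[0,1]]$, transported through the $FBL$-functor, and then to upgrade that failure to a quantitative, constant-free one. Suppose toward a contradiction that $FBL[E]$ has the $\lambda$-Fatou property and that $j\colon L_1[0,1]\to E$ is an isomorphic embedding. By \cite[Corollary 4.12]{OTTT} the extension $\overline{j}\colon FBL[L_1[0,1]]\to FBL[E]$ is a lattice isomorphic embedding, so there are constants $0<c\le C$ with $c\|h\|\le\|\overline{j}h\|\le C\|h\|$ for every $h\in FBL[L_1[0,1]]$. Note that, unlike the argument for $c_0$, here one pushes witnesses \emph{forward} through $\overline{j}$, so neither complementation (Sobczyk) nor the passage to a separable ideal is needed.

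The key observation I would isolate is that $\overline{j}$ preserves suprema that are realized pointwise. Indeed, identifying elements of $FBL[L_1[0,1]]$ and $FBL[E]$ with positively homogeneous functions on $L_\infty=L_1^*$ and on $E^*$ respectively, the lattice homomorphism $\overline{j}$ is exactly composition with the adjoint, $(\overline{j}h)(x^*)=h(j^*x^*)$; this is verified on the generators $\delta_v$ ($v\in L_1[0,1]$), where $\overline{j}\delta_v=\delta_{jv}$, and extends by density and continuity. Since the order and lattice operations on $H_1[\cdot]$ are pointwise, whenever a directed set $A\subset FBL[L_1[0,1]]_+$ has supremum $b$ given by the pointwise supremum (that is, $b(y^*)=\sup_{a\in A}a(y^*)$ with $b\in FBL[L_1[0,1]]$), then $(\overline{j}b)(x^*)=\sup_{a\in A}(\overline{j}a)(x^*)$, and since $\overline{j}b\in FBL[E]$ this pointwise supremum is the lattice supremum of $\overline{j}(A)$ in $FBL[E]$ (in these function lattices the lattice supremum coincides with the pointwise one whenever the latter lies in the space).

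With this in hand the transfer is immediate. Let $A\subset FBL[L_1[0,1]]_+$ be directed with pointwise supremum $b$, normalized so that $\sup_{a\in A}\|a\|=1$ and $\|b\|=\rho$. Pushing forward, $\overline{j}(A)$ is directed with supremum $\overline{j}b$ in $FBL[E]$, so the $\lambda$-Fatou property gives $\|\overline{j}b\|\le\lambda\sup_a\|\overline{j}a\|$, whence
\[
c\,\rho\le\|\overline{j}b\|\le\lambda\sup_a\|\overline{j}a\|\le\lambda C\sup_a\|a\|=\lambda C,
\]
that is $\rho\le\lambda C/c$. Therefore it suffices to exhibit in $FBL[L_1[0,1]]$ directed families, with supremum realized pointwise, whose ratio $\rho=\|b\|/\sup_{a}\|a\|$ is \emph{unbounded}; choosing $\rho>\lambda C/c$ then contradicts the displayed inequality, and since $\lambda$ was arbitrary, no $\lambda$ can work.

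The main obstacle is precisely this last point: producing witnesses of arbitrarily large ratio. The qualitative failure of the Fatou property for $FBL[L_1[0,1]]$ is \cite[Theorem 4.13]{ART}, and I would first check whether the witnesses constructed there already have unbounded ratio; if not, I would amplify them using the self-similarity $L_1[0,1]\cong\big(\bigoplus_{k=1}^N L_1[0,1]\big)_{\ell_1}$. Each summand $L_1(I_k)$ is an ideal of $L_1[0,1]$, so the induced maps $\overline{\iota_k}\colon FBL[L_1[0,1]]\to FBL[L_1[0,1]]$ are isometric lattice embeddings with disjointly supported ranges; spreading a single pointwise-supremum witness of ratio $\rho_0>1$ over the $N$ copies and recombining by a finite lattice supremum should amplify the ratio with $N$. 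The delicate part, and the step I expect to require the most care, is controlling $\|\cdot\|_{FBL[L_1]}$ on such disjointly supported recombinations: one must verify that the numerator grows (roughly) linearly in $N$, because the free norm on $L_\infty$ registers the disjoint coordinates in an $\ell_1$-like manner, while the denominator $\sup_a\|a\|$ remains controlled, so that $\rho\to\infty$.
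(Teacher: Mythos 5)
Your reduction scheme has a fatal flaw at its center: the witnesses you need cannot exist. By \cite[Lemma 4.2]{ART} (the $p=1$ case of Lemma \ref{lem:PointwiseSupremum} of this paper), if $A\subset H_1[L_1]_+$ is upwards directed and pointwise bounded and $b$ denotes its \emph{pointwise} supremum, then $\|b\|_{FBL[L_1]}=\sup_{a\in A}\|a\|_{FBL[L_1]}$. Hence every directed family whose supremum is realized pointwise has ratio $\rho=1$: the quantity you propose to make unbounded is identically $1$ on exactly the class of families to which your transfer lemma applies. The failure of the Fatou property in $FBL[L_1]$ (the content of \cite[Theorem 4.13]{ART}, and of the paper's own proof) is precisely the opposite phenomenon: one exhibits an increasing sequence $(\tilde f_n)$ whose pointwise supremum ($g\wedge\|T^*(\cdot)\|_{L_1}$ in the paper's notation) does \emph{not} belong to the lattice, while its least upper bound \emph{inside} the free Banach lattice is the much larger element $g$. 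Your self-similarity amplification cannot rescue this, since it again only manufactures pointwise-realized suprema and therefore still produces ratio $1$.

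For the suprema that actually witness the failure (the non-pointwise ones), your transfer step breaks down as well: a lattice isomorphic embedding $\overline{j}$ need not carry the least upper bound computed in $FBL[L_1]$ to the least upper bound of the image family in $FBL[E]$, because the image sublattice is not known to be regular in $FBL[E]$ and the larger lattice may contain smaller upper bounds that do not come from $FBL[L_1]$. This is exactly why the paper does not transport the witnesses but rebuilds them inside $FBL[E]$: it uses Sobczyk's theorem to extend the Rademacher functionals to a $w^*$-null sequence $(\tilde r_j)\subset E^*$ in order to verify, for an arbitrary prescribed $g\in FBL[E]_+$ of large norm, that $g$ is the supremum of $(g\wedge f_n)$ \emph{in $FBL[E]$}, and it handles non-separable $E$ via a separable ideal (Sims--Yost) together with the regularity of $FBL[F]$ in $FBL[E]$ and Proposition \ref{p:regular}. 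Both ingredients you declared unnecessary are doing essential work, and the gap in your argument is not a technicality to be patched but a structural obstruction.
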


\begin{proof}
For convenience let $L_1=L_1[0,1]$. Let us first assume $E$ is separable. Suppose that $E$ contains a closed subspace isomorphic to $L_1$. Hence, let $T:L_1 \rightarrow E$ be an operator  such that $T(L_1)=F$ is isomorphic to $L_1$. Without loss of generality we can assume that $\|T\|=1$.

For each $n \in \mathbb{N}$, define  
$I_{n,j}:=[\frac{j-1}{2^n},\frac{j}{2^n}]$ for all $ j \in \{1,...,2^n\}$ and set 
$y_{n,j} :=T(\chi_{I_{n,j}})$.

We define $f_n =\sum\limits_{i=1}^{2^n}  |\delta_{y_{n,j}}|\in FBL[E]_+$ which satisfy
$$
\|f_n\|_{FBL[E]} \le  \sum\limits_{i=1}^{2^n}  \|\delta_{y_{n,j}}\| =\sum\limits_{i=1}^{2^n}  \|y_{n,j}\| \le   \sum\limits_{i=1}^{2^n}  \|\chi_{I_{n,j}}\|_{L_1} =1.
$$

It is easy to check (see \cite[Lemma 4.12]{ART}) that $f_n\leq f_{n+1}$ and for every $y^*$ in $E^*$ 
$$
\lim_{n \to \infty} f_n(y^*)=\|T^*y^*\|_{L_1}.
$$

Now, fix any $g \in FBL[E]_+ $ with $ \|g\| _{FBL[E]} > 1 $  and let $\Tilde{f_n}:= g \wedge f_n$. The sequence $\Tilde{f_n}$ is increasing , bounded above by $g$
and satisfies $$\|\Tilde{f_n}\|_ {FBL[E]}  \le \|f_n\|_{FBL[E]} \le 1.$$
We will show that 
$ \sup_{n \in \mathbb{N}} \Tilde{f_n}  = g $ in $FBL[E].$ 

To this end, let $(r_j)_{j\in \mathbb{N}}$ denote the sequence of Rademacher functions, which converges to 0 in the $w^*$-topology of $L_1^*=L_{\infty}$. Thus, we can define  $S_0:L_1 \rightarrow c_0$ by 
$$
S_0 (f) =\bigg(\int r_n fd\mu\bigg)_{n\in \mathbb{N}}.
$$ 
Note $S_0 \circ T^{-1}$ is an operator from $F\subset E$ to $ c_0$ and, as $E$ is separable, by Sobczyk's theorem there is an extension $S:E \rightarrow c_0$ with $S|_F=S_0 T^{-1}$. Hence, we can consider a sequence $(\tilde {r}_j)_{j\in \mathbb{N}}\subset E^*$ such that $S(f) =(\langle \tilde{r}_n,f\rangle)_{n\in \mathbb{N}}$ for every $f$ in  $E$. In other words, $(\tilde {r}_j)_{j\in \mathbb{N}}$ is $w^*$-null in $E^*$.

Given $h$ in $ E^*$, let $K =\|g\|_{FBL[E]}  \|h\|  +  \|T^*h\|_{L_1} +1$, and define $h_j =h + K \tilde{r}_j$. The sequence $(h_j)$ is $w^*$-convergent to $h$ in $E^*$. Since $g$ is $w^*$- continuous on bounded sets, it follows that $g(h_j)\rightarrow g(h)$ as $j\rightarrow \infty$. Therefore, we can find $j_0 \in \mathbb{N}$ such that for every $j\ge j_0$ we have
\begin{equation}\label{eq:K}
g(h_j) \le g(h) +1 \le \|g\|_{FBL[E]}  \|h\|+1 \le K-\|T^*h\|_{L_1}.
\end{equation}

We claim that for $j\geq j_0$, $g(h_j)\le \|T^*h_j\|_{L_1}$. Indeed, we have
$$
\|T^* h_j\|_{L_1} = \|T^* h + KT^*\tilde{r_j}\|_{L_1} \geq K \|T^* \tilde{r_j}\|_{L_1} -\|T^*h\|_{L_1} $$ 
and by construction of $\tilde{r_j}$ we have
$$
\|T^* \tilde{r_j}\|_{L_1} \geq \langle T^*\tilde{r_j},r_j\rangle =\langle \tilde{r_j},Tr_j\rangle =\langle r_j,T^{-1}(Tr_j)\rangle=\langle r_j,r_j\rangle=1.
$$
Hence, by \eqref{eq:K} we get
$$
\|T^* h_j\|_{L_1} \geq K-\|T^*h\|_{L_1} \geq  g(h_j),
$$ as claimed.
Now let $\phi \in FBL[E] $ be any upper bound of  $(\Tilde{f_n})_{n \in \mathbb{N}}$. For $j\ge j_0$, we have  
$$
\phi(h_j) \ge sup_{n \in \mathbb{N}} \Tilde{f_n} (h_j)= g(h_j) \wedge sup_{n \in \mathbb{N}} f_n(h_j)=g(h_j) \wedge \|T^*h_j\|_{L_1} = g(h_j).
$$
Thus, by the $w^*$ continuity of $g$ and $\phi$ we have $g (h)\le \phi(h)$ for every $h\in E^*$. Therefore, $g=\sup_{n \in \mathbb{N}} \Tilde{f_n} $ and we get a contradiction because  $\|g\|_{FBL[E]} > 1$ was arbitrarily large but $ \|\Tilde{f_n}\|_{FBL[E]} \le 1$.

To finish the proof, let us assume now that $E$ is non-separable and contains a closed subspace isomorphic to $L_1$. By \cite{SY}, there is $F\subset E$ a separable ideal containing a subspace isomorphic to $L_1$. Note that by \cite{OTTT} $FBL[F]$ is a regular sublattice of $FBL[E]$, so if $FBL[E]$ had the $\lambda$-Fatou property, using Proposition \ref{p:regular}, $FBL[F]$ also would have the $\lambda$-Fatou property. This is a contradiction with the first part of the proof.
\end{proof}

\section{Strong Nakano of $FBL^{(p)}[\ell_1(A)]$}\label{s:fblpell1}

The purpose of this section is to show that the proof of \cite[Theorem 4.11]{ART} that $FBL[\ell_1(A)]$ has the strong Nakano property can actually be adapted for the free $p$-convex Banach lattice $FBL^{(p)}[\ell_1(A)]$ with minor adjustments. We have decided to include details here as it will be convenient for future reference.

As in \cite{JLTTT}, given a Banach space $E$, let $H_p[E]$ denote the set of positively homogeneous functions on $E^*$ equipped with the norm
\begin{equation}\label{eq: norm fbp}
	\|f\|_{FBL^{(p)}[E]}=\sup\Big\{\Big(\sum_{i=1}^m |f(x_i^*)|^p\Big)^{\frac1p} :m\in \mathbb N, (x_i^*)_{i=1}^m\subset E^*, \sup_{x\in B_E} \sum_{i=1}^m |x_i^*(x)|^p\leq 1\Big\},
\end{equation}
and the pointwise order and lattice operations. The free $p$-convex Banach lattice generated by $E$ can be identified with the closed sublattice of $H_p[E]$ generated by the evaluation functionals $\delta_x:E^*\rightarrow \mathbb R$ given by $\delta_x(x^*)=x^*(x)$ for $x\in E$, $x^*\in E^*$. This is in the sense that for every $p$-convex Banach lattice $X$ and every operator $T:E\rightarrow X$ there is a unique lattice homomorphisms $\hat T:FBL^{(p)}[E]\rightarrow X$ extending $T$ (see \cite{JLTTT} for details). 

It should be noted that in the case $p=\infty$, $FBL^{(\infty)}[E]$ is always an AM-space, so by Proposition \ref{p:AM} and \cite[Proposition 9.1]{OTTT} we get that $FBL^{(\infty)}[E]$ has the Strong Nakano property precisely when $E$ is finite dimensional.

\begin{lemma}\label{lem:PointwiseSupremum}
Let $E$ be a Banach space and let $A$ be an upwards directed and pointwise bounded set in $H_p[E]$. Define $g: E^*\to \mathbb{R}_+$ 
by 
$$
g(x^*):=\sup\{f(x^*):f\in A \}\quad\quad \text{for  } x^*\in E^*.
$$
It follows that $g\in H_p[E]_+$ with 
$$
	\|g\|_{{FBL}^{(p)}[E]} = \sup \{\|f\|_{{FBL}^{(p)}[E]} : \, f\in A\}.
$$ 
\end{lemma}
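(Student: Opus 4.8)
The goal is to show that the pointwise supremum $g$ of an upwards directed, pointwise bounded family $A \subset H_p[E]$ lies in $H_p[E]_+$ and has norm exactly $\sup\{\|f\|_{FBL^{(p)}[E]} : f \in A\}$. Let me denote this supremum of norms by $M$, which I first note is finite since the norm dominates pointwise evaluation and $A$ is pointwise bounded; more to the point, $M$ will turn out to be the correct value. The plan is to prove the two inequalities separately, the easy direction being $M \le \|g\|_{FBL^{(p)}[E]}$ and the substantive direction being $\|g\|_{FBL^{(p)}[E]} \le M$.

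\emph{Positive homogeneity and the easy inequality.} First I would check $g \in H_p[E]_+$. Positive homogeneity of $g$ is immediate: each $f \in A$ is positively homogeneous, so for $t \ge 0$ we have $g(tx^*) = \sup_f f(tx^*) = t\sup_f f(x^*) = tg(x^*)$. Nonnegativity is clear if $A$ is nonempty (otherwise the statement is vacuous). For the inequality $M \le \|g\|_{FBL^{(p)}[E]}$, fix any $f \in A$; since $f \le g$ pointwise and both are positively homogeneous, for any finite system $(x_i^*)_{i=1}^m$ with $\sup_{x \in B_E}\sum_i |x_i^*(x)|^p \le 1$ we have $\big(\sum_i |f(x_i^*)|^p\big)^{1/p} \le \big(\sum_i |g(x_i^*)|^p\big)^{1/p} \le \|g\|_{FBL^{(p)}[E]}$ (using $0 \le f \le g$, so $|f(x_i^*)| \le g(x_i^*) = |g(x_i^*)|$). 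Taking the supremum over such systems gives $\|f\|_{FBL^{(p)}[E]} \le \|g\|_{FBL^{(p)}[E]}$, and then the supremum over $f \in A$ yields $M \le \|g\|_{FBL^{(p)}[E]}$; in particular $g \in H_p[E]$ once the reverse bound is established.

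\emph{The main inequality.} For $\|g\|_{FBL^{(p)}[E]} \le M$, fix a finite system $(x_i^*)_{i=1}^m$ with $\sup_{x \in B_E}\sum_{i=1}^m |x_i^*(x)|^p \le 1$; I must bound $\sum_{i=1}^m g(x_i^*)^p$ by $M^p$. The key idea is that, because the defining system is \emph{finite}, I can realize all $m$ suprema simultaneously by a single member of $A$ using upward directedness. Concretely, fix $\varepsilon > 0$. For each $i$, choose $f_i \in A$ with $f_i(x_i^*) > g(x_i^*) - \varepsilon$. By upward directedness of $A$, there is a single $f \in A$ with $f \ge f_i$ for all $i = 1,\dots,m$, hence $f(x_i^*) \ge f_i(x_i^*) > g(x_i^*) - \varepsilon$ for every $i$. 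Then
$$
\Big(\sum_{i=1}^m g(x_i^*)^p\Big)^{1/p} \le \Big(\sum_{i=1}^m (f(x_i^*)+\varepsilon)^p\Big)^{1/p} \le \Big(\sum_{i=1}^m f(x_i^*)^p\Big)^{1/p} + \varepsilon m^{1/p} \le \|f\|_{FBL^{(p)}[E]} + \varepsilon m^{1/p} \le M + \varepsilon m^{1/p},
$$
where the triangle inequality in $\ell_p$ handles the $\varepsilon$-term and the penultimate step uses that $(x_i^*)$ is an admissible system in the definition of $\|f\|_{FBL^{(p)}[E]}$. Letting $\varepsilon \to 0$ (with $m$ fixed) gives $\big(\sum_{i=1}^m g(x_i^*)^p\big)^{1/p} \le M$, and taking the supremum over all admissible finite systems yields $\|g\|_{FBL^{(p)}[E]} \le M$.

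\emph{Main obstacle.} The only genuinely delicate point is the synchronization step: passing from the pointwise suprema $g(x_i^*)$, each approximated by a possibly different $f_i$, to a single $f \in A$ dominating all of them. This is exactly where upward directedness is indispensable and where the \emph{finiteness} of the admissible system $(x_i^*)_{i=1}^m$ is used in an essential way — one applies directedness only finitely many times. The rest is routine estimation. One should take a little care that the $\varepsilon m^{1/p}$ error is harmless because $m$ is held fixed while $\varepsilon \to 0$, and that the argument makes sense for $p = \infty$ with the usual convention ($\ell_\infty$-norm, and the error term becomes $\varepsilon$), so the statement and proof cover that case as well.
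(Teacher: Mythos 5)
Your proof is correct and follows essentially the same route as the paper's: both reduce to a fixed admissible finite system $(x_i^*)_{i=1}^m$, use upward directedness to replace the $m$ individual approximants by a single $f\in A$, and then absorb the $\varepsilon$-error (the paper approximates the $p$-th powers $g(x_k^*)^p$ within $\varepsilon/n$ directly, while you approximate $g(x_i^*)$ and invoke the $\ell_p$ triangle inequality, a purely cosmetic difference).
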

\begin{proof}
Clearly, $g$ is positively homogeneous and $\|g\|_ {{FBL}^{(p)}[E]} \geq \|f\|_{{FBL}^{(p)}[E]}$ for every $f\in A$. 
To prove the converse inequality $\|g\|_{{FBL}^{(p)}[E]} \leq \sup \{\|f\|_{{FBL}^{(p)}[E]} : \, f\in A\}:=\alpha$ we can assume without loss of generality that
the supremum is finite. Fix $\epsilon>0$ and take any $x_1^*,\dots,x_n^* \in E^*$ such that $\sum_{k=1}^n |x_k^*(x)|^p\leq 1$
for every $x\in B_E$. Since $A$ is upwards directed, we can find $f\in  A$ such that
$g(x_k^*)^p-\frac{\epsilon}{n} \leq f(x_k^*)^p$ for all $k\in \{1\dots,n\}$, therefore
$$
	\sum_{k=1}^n g(x^*_k)^p \leq \epsilon+\sum_{k=1}^n f(x^*_k)^p \leq \epsilon+\alpha^p.
$$
It follows that $\|g\|_ {{FBL}^{(p)}[E]}^p\leq \epsilon+\alpha^p$. As $\epsilon>0$ is arbitrary, $\|g\|_{{FBL}^{(p)}[E]} \leq \alpha$.
\end{proof}

\begin{definition}
Let $E$ be a Banach space. We say that $f\in H_p[E]_+$ is {\em maximal} if 
$$
	\big\{g\in H_p[E]_+: \, g \geq f, \, \|g\|_{{FBL}^{(p)}[E]} = \|f\|_{{FBL}^{(p)}[E]} \big\}=\{f\}.
$$
\end{definition}

A standard application of Zorn's lemma allows us to prove the following (see \cite[Lemma 4.4]{ART}):

\begin{lemma}\label{lem:Zorn}
Let $E$ be a Banach space and $f\in H_p[E]_+$. Then there exists a maximal $\tilde{f} \in H_p[E]_+$ such that 
$f\leq \tilde{f}$  and $\|f\|_{{FBL}^{(p)}[E]} = \|\tilde{f}\|_{{FBL}^{(p)}[E]}$.
\end{lemma}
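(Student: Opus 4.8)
The plan is to apply Zorn's lemma to the partially ordered set
$$
\mathcal{C}=\big\{g\in H_p[E]_+:\, g\geq f,\ \|g\|_{{FBL}^{(p)}[E]}=\|f\|_{{FBL}^{(p)}[E]}\big\},
$$
equipped with the pointwise order inherited from $H_p[E]$. Note that $\mathcal{C}$ is nonempty, since $f\in\mathcal{C}$, and that a maximal element $\tilde f$ of the poset $(\mathcal{C},\leq)$ is automatically maximal in the sense of the preceding definition: indeed, if $g\in H_p[E]_+$ satisfies $g\geq\tilde f$ and $\|g\|_{{FBL}^{(p)}[E]}=\|\tilde f\|_{{FBL}^{(p)}[E]}=\|f\|_{{FBL}^{(p)}[E]}$, then $g\geq\tilde f\geq f$ forces $g\in\mathcal{C}$, whence maximality of $\tilde f$ in $\mathcal{C}$ gives $g=\tilde f$.

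First I would record that every element of $\mathcal{C}$ is controlled pointwise by its norm. Taking a single functional in the defining expression \eqref{eq: norm fbp} yields, for every $g\in H_p[E]$ and every $x^*\in E^*$, the bound $|g(x^*)|\leq \|g\|_{{FBL}^{(p)}[E]}\,\|x^*\|$; in particular, for $g\in\mathcal{C}$ this reads $|g(x^*)|\leq \|f\|_{{FBL}^{(p)}[E]}\,\|x^*\|$. Consequently, any subset of $\mathcal{C}$, and in particular any chain, is pointwise bounded.

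Next I would verify the chain condition. Let $\mathcal{K}\subset\mathcal{C}$ be a nonempty chain. Being totally ordered, $\mathcal{K}$ is in particular upwards directed, and by the previous paragraph it is pointwise bounded, so Lemma \ref{lem:PointwiseSupremum} applies: the pointwise supremum $h(x^*):=\sup\{k(x^*):k\in\mathcal{K}\}$ lies in $H_p[E]_+$ and satisfies $\|h\|_{{FBL}^{(p)}[E]}=\sup\{\|k\|_{{FBL}^{(p)}[E]}:k\in\mathcal{K}\}=\|f\|_{{FBL}^{(p)}[E]}$. Since $h\geq k\geq f$ for any $k\in\mathcal{K}$, we conclude $h\in\mathcal{C}$, and $h$ is an upper bound for $\mathcal{K}$ in $\mathcal{C}$. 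As $\mathcal{C}$ is nonempty (which handles the empty chain), every chain admits an upper bound, and Zorn's lemma produces a maximal element $\tilde f\in\mathcal{C}$. By the observation in the first paragraph, this $\tilde f$ is maximal in the required sense and satisfies $f\leq\tilde f$ and $\|\tilde f\|_{{FBL}^{(p)}[E]}=\|f\|_{{FBL}^{(p)}[E]}$.

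The argument is entirely routine once Lemma \ref{lem:PointwiseSupremum} is in hand, which is why the statement is flagged as a standard application of Zorn's lemma. The only steps needing a moment's care are the pointwise boundedness of chains (precisely the hypothesis that lets us invoke Lemma \ref{lem:PointwiseSupremum}) and the translation between maximality in the poset $(\mathcal{C},\leq)$ and maximality in the sense of the definition; neither presents a genuine obstacle.
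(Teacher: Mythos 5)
Your proof is correct and is exactly the ``standard application of Zorn's lemma'' that the paper alludes to (and defers to \cite[Lemma 4.4]{ART} for): order the set of majorants of $f$ with the same norm pointwise, use the single-functional estimate $|g(x^*)|\leq\|g\|_{FBL^{(p)}[E]}\|x^*\|$ to get pointwise boundedness of chains, and invoke Lemma \ref{lem:PointwiseSupremum} to produce upper bounds of chains before applying Zorn. The two points you flag as needing care (pointwise boundedness and the equivalence of poset-maximality with the Definition's notion of maximality) are indeed the only nontrivial ones, and you handle both correctly.
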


\begin{lemma}\label{lem:PropertiesMaximal}
Let $A$ be a non-empty set and let $f\in H_p[\ell_1(A)]_+$ be maximal. 
\begin{enumerate}
\item[(i)] $f(x^*)\leq f(y^*)$ whenever $x^*,y^*\in \ell_{\infty}$ satisfy $|x^*|\leq |y^*|$.
\item[(ii)]  $\big(\sum\limits_{k=1}^n (f(x^*_k))^p\big)^{\frac{1}{p}} \leq f\big((\sum\limits_{k=1}^n (x^*_k)^p\big)^{\frac{1}{p}})$
for every $n\in\mathbb N$ and $x_1^*,\dots,x_n^*\in (\ell_{\infty})_+$.
\item[(iii)] $\|f\|_{{FBL}^{(p)}[E]} = \|f\|_\infty$.
\end{enumerate}
\end{lemma}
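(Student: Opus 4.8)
The plan is to exploit the maximality of $f$ by constructing, for each desired inequality, a pointwise-larger function with the \emph{same} $FBL^{(p)}$-norm, which maximality then forces to coincide with $f$. Everything rests on one structural simplification available because $E=\ell_1(A)$: since $E^*=\ell_\infty(A)$, the map $x\mapsto\sum_i|x_i^*(x)|^p$ is convex, and $B_{\ell_1(A)}$ is the closed convex hull of the signed unit vectors $\pm e_j$, the defining supremum in \eqref{eq: norm fbp} reduces to the coordinatewise condition
$$
\sup_{x\in B_E}\sum_{i=1}^m|x_i^*(x)|^p=\sup_{j\in A}\sum_{i=1}^m|x_i^*(j)|^p.
$$
I would record this first, as it is what makes the norm behave well under the pooling of test vectors below.

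To obtain (i) and (ii) at once, I would introduce
$$
g(y^*):=\sup\Big\{\Big(\sum_{k=1}^n f(x_k^*)^p\Big)^{1/p}:\ n\in\mathbb N,\ x_1^*,\dots,x_n^*\in\ell_\infty,\ \sum_{k=1}^n|x_k^*(j)|^p\le|y^*(j)|^p\ \text{for all }j\in A\Big\}.
$$
A short computation shows $g$ is positively homogeneous, that $g(y^*)\le\|f\|_{FBL^{(p)}[E]}\,\|y^*\|_\infty<\infty$ (apply the coordinatewise norm formula to any competing family), and that $g\ge f$ (take $n=1$, $x_1^*=y^*$). Hence $g\in H_p[\ell_1(A)]_+$. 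The crucial point is the reverse norm inequality $\|g\|_{FBL^{(p)}[E]}\le\|f\|_{FBL^{(p)}[E]}$: given $y_1^*,\dots,y_m^*$ with $\sup_j\sum_i|y_i^*(j)|^p\le1$, I would choose for each $i$ a near-optimal witnessing family $(x_{i,k}^*)_k$ for $g(y_i^*)$ and \emph{pool} them into the single family $(x_{i,k}^*)_{i,k}$; the coordinatewise constraints then add up to $\sum_{i,k}|x_{i,k}^*(j)|^p\le\sum_i|y_i^*(j)|^p\le1$, so the definition of $\|f\|_{FBL^{(p)}[E]}$ bounds $\sum_{i,k}f(x_{i,k}^*)^p$, and hence $\sum_i g(y_i^*)^p$, by $\|f\|_{FBL^{(p)}[E]}^p$. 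This pooling step is the main obstacle, and it is exactly where the $\ell_1$-structure (coordinatewise constraints that are additive across the families) is indispensable. Since $g\ge f$ and $\|g\|=\|f\|$, maximality of $f$ gives $g=f$. Reading off $g=f$ with $n=1$ and $|x^*|\le|y^*|$ yields (i); reading it off with $x_k^*\in(\ell_\infty)_+$ and $y^*=(\sum_k(x_k^*)^p)^{1/p}$, where the constraint holds with equality, yields (ii).

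Finally, for (iii) I note that $\|f\|_\infty\le\|f\|_{FBL^{(p)}[E]}$ is immediate from the one-vector case of the norm. For the reverse, I would take any $x_1^*,\dots,x_m^*$ with $\sup_j\sum_i|x_i^*(j)|^p\le1$, replace each $x_i^*$ by $|x_i^*|$ using (i), and then apply (ii) to get $\big(\sum_i f(|x_i^*|)^p\big)^{1/p}\le f(y^*)$ with $y^*:=(\sum_i|x_i^*|^p)^{1/p}$ satisfying $\|y^*\|_\infty\le1$, whence $f(y^*)\le\|f\|_\infty$. Taking the supremum over admissible families gives $\|f\|_{FBL^{(p)}[E]}\le\|f\|_\infty$, completing the proof. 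I expect the only delicate verifications to be the homogeneity and finiteness of $g$ together with the pooling estimate; the remainder is bookkeeping with the coordinatewise norm formula.
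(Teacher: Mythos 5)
Your argument is correct, but it is organized genuinely differently from the paper's. The paper proves (i) and (ii) by two separate contradiction arguments: if the inequality fails at some point, $f$ is modified on a single ray ($R(y^*)$ for (i), $R\big((\sum_k(x_k^*)^p)^{1/p}\big)$ for (ii)) to produce a strictly larger positively homogeneous function, and one checks that this local perturbation does not increase the norm by splitting an arbitrary admissible test family into the functionals lying on the distinguished ray and the rest. You instead construct in one stroke the global majorant $g$, namely the supremum of $\big(\sum_k f(x_k^*)^p\big)^{1/p}$ over all families subordinate to $|y^*|^p$ coordinatewise, prove $\|g\|_{FBL^{(p)}[\ell_1(A)]}=\|f\|_{FBL^{(p)}[\ell_1(A)]}$ once by pooling near-optimal witnesses, and use maximality affirmatively to conclude $g=f$; items (i) and (ii) then follow by specializing the defining supremum to a singleton, respectively to a family attaining the constraint with equality. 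Both proofs ultimately rest on the same structural fact, which you correctly isolate at the outset: over $\ell_1(A)$ the admissibility condition in the norm is coordinatewise, hence additive across families, so a subordinate family can be substituted for the functionals it majorizes without violating admissibility (this is exactly the computation the paper carries out inside each contradiction argument). Your version buys a unified treatment of (i) and (ii) and avoids the two ad hoc ray constructions; the paper's version is slightly more elementary in that it never needs to verify that the envelope $g$ is well defined, finite and positively homogeneous. Part (iii) is identical in both. The only details you should write out in full are the ones you flag yourself: the finiteness and homogeneity of $g$, and the $\varepsilon$-bookkeeping in the pooling step (choose witnesses with $\sum_k f(x_{i,k}^*)^p\ge g(y_i^*)^p-\varepsilon/m$ so that the errors sum to $\varepsilon$).
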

\begin{proof} For any $z^*\in \ell_{\infty}$ let $R(z^*):=\{\lambda z^*:  \lambda>0\}\subset \ell_{\infty}$.

(i): We argue by contradiction. Suppose that $f(x^*)>f(y^*)$ and define $g:\ell_{\infty}\to \mathbb{R}_+$ by   
$$
	\begin{cases}
		g(\lambda y^*):=f(\lambda x^*) & \text{for all $\lambda>0$}, \\
		g(z^*):=f(z^*) & \text{for all $z^*\in \ell_{\infty} \setminus R(y^*)$}.
	\end{cases}
$$
It is easy to check that $g$ is positively homogeneous. Since $f(x^*)>f(y^*)$, we have $f\leq g$ and $f\neq g$.
Bearing in mind that $f$ is maximal, in order to get a contradiction it suffices to check that $\|g\|_{{FBL}^{(p)}[E]}=\|f\|_{{FBL}^{(p)}[E]}$.
To this end, take any $x_1^*,\dots,x_n^*\in \ell_{\infty}$ such that $\sum_{k=1}^n |x_k^*(a)|^p\leq 1$ for all $a\in A$. Let
$I$ be the set of those $k\in \{1,\dots,n\}$ such that $x_k^*\not\in R(y^*)$ and let $J:=\{1,\dots,n\}\setminus I$, so that for each $k\in J$
we have $x_k^*=\lambda_k y^*$ for some $\lambda_k>0$. Note that for every $a\in A$ we have
\begin{eqnarray*}
	\sum_{k\in I} |x_k^*(a)|^p+\sum_{k\in J}  |\lambda_k x^*(a)|^p&=&
	\sum_{k\in I} |x_k^*(a)|^p+|x^*(a)|^p\sum_{k\in J}  \lambda_k ^p \\ & \leq &
	\sum_{k\in I} |x_k^*(a)|^p+|y^*(a)|^p\sum_{k\in J}  \lambda_k^p\\
 &=	&\sum_{k=1}^n |x_k^*(a)|^p \leq 1.
\end{eqnarray*}
Hence,
$$
	\sum_{k=1}^n g(x_k^*)^p=
	\sum_{k\in I} f(x_k^*)^p+\sum_{k\in J}  f(\lambda_k x^*)^p
	\leq \|f\|_{{FBL}^{(p)}[\ell_1(A)]}^p.
$$
This shows that $\|g\|_{{FBL}^{(p)}[\ell_1(A)]} \leq \|f\|_{{FBL}^{(p)}[\ell_1(A)]}$, which is a contradiction.

(ii): Set $x^*:=(\sum_{k=1}^n (x^*_k)^p)^{\frac{1}{p}}$. By contradiction, suppose otherwise that $f(x^*)<(\sum_{k=1}^n (f(x^*_k))^p)^{\frac{1}{p}}$. 
Define $g:\ell_{\infty}\to \mathbb{R}_+$ by   
$$
	\begin{cases}
		g(\lambda x^*):=\lambda\ (\sum_{k=1}^n (f(x^*_k))^p)^{\frac{1}{p}} & \text{for all $\lambda>0$}, \\
		g(z^*):=f(z^*) & \text{for all $z^*\in \ell_{\infty} \setminus R(x^*)$}.
	\end{cases}
$$
Clearly, $g$ is positively homogeneous, $f\leq g$ and $f\neq g$. Again by the maximality of~$f$,
to get a contradiction it suffices to show that $\|g\|_{{FBL}^{(p)}[\ell_1(A)]}=\|f\|_{{FBL}^{(p)}[\ell_1(A)]}$.
Take $y_1^*,\dots,y_m^*\in \ell_{\infty}$ such that $\sum_{j=1}^m |y_j^*(a)|^p\leq 1$ for all $a\in A$. Let
$I$ denote the set of all $j\in \{1,\dots,m\}$ for which $y_j^*\not\in R(x^*)$ and let $J:=\{1,\dots,m\}\setminus I$, so that for each $j\in J$
we can write $y_j^*=\lambda_j x^*$ for some $\lambda_j>0$. Set $\mu:=\sum_{j\in J}\lambda_j^p$.
Since
\begin{eqnarray*}
	\sum_{j\in I}|y_j^*(a)|^p+\sum_{k=1}^n|(\mu)^{\frac{1}{p}} x_k^*(a)|^p&=& 	\sum_{j\in I}|y_j^*(a)|^p+\mu (x^*(a))^p \\ 
 &=&	\sum_{j\in I}|y_j^*(a)|+\sum_{j\in J}( \lambda_j x^*(a))^p\\
 &=& \sum_{j=1}^m|y_j^*(a)|^p \leq 1,
\end{eqnarray*}
for every $a\in A$, we obtain
\begin{eqnarray*}
	\sum_{j=1}^m g(y_j^*)^p&	=&\sum_{j\in I}f(y_j^*)^p + \sum_{j\in J}\lambda_j^p\Big(\sum_{k=1}^n f(x_k^*)^p\Big)
	\\ &=&\sum_{j\in I} f(y_j^*)^p+\sum_{k=1}^n f((\mu)^{\frac{1}{p}} x_k^*) \leq \|f\|_{{FBL}^{(p)}[\ell_1(A)]}^p.
\end{eqnarray*}
It follows that $\|g\|_{{FBL}^{(p)}[\ell_1(A)]} \leq \|f\|_{{FBL}^{(p)}[\ell_1(A)]}$, which is a contradiction.

(iii): We have $\|f\|_ {{FBL}^{(p)}[\ell_1(A)]}\geq \|f\|_\infty$. To prove the
equality, take finitely many $x_1^*,\dots,x_n^*\in \ell_{\infty}$
such that $\sum_{k=1}^n |x_k^*(a)|^p\leq 1$ for every $a\in A$. Then $x^*:=(\sum_{k=1}^n |x_k^*|^p)^{\frac{1}{p}}\in B_{\ell_{\infty}}$ and
$$
	\sum_{k=1}^n f(x^*_k)^p \stackrel{{\rm (i)}}{\leq}
	\sum_{k=1}^n f(|x^*_k|)^p \stackrel{{\rm (ii)}}{\leq} f(x^*)^p \leq \|f\|^p_\infty.
$$
This shows that $\|f\|_{FBL[\ell_1(A)]} \leq \|f\|_\infty$ and finishes the proof.
\end{proof}

\begin{lemma}\label{lem:normoflinear}
Let $A$ be a non-empty set and let $\phi:\ell_{\infty} \rightarrow \mathbb{R}$ be a linear functional. Define $g_\phi:\ell_{\infty} \to \mathbb{R}_+$ by
$$
	g_\phi(x^*) := |\phi(|x^*|^p)|^{\frac{1}{p}}
	\quad\mbox{ for all }x^*\in \ell_{\infty}. 
$$
Then $g_\phi\in H_p[\ell_1(A)]_+$ and 
$$
	\|g_\phi\|_{{FBL}^{(p)}[\ell_1(A)]}= \sup\big\{|\phi(x^*)|^{\frac{1}{p}}: \, x^*\in B_{\ell_{\infty}}\big\}.
$$
\end{lemma}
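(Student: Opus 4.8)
The plan is to compute $\|g_\phi\|_{{FBL}^{(p)}[\ell_1(A)]}$ by the substitution $u_i:=|x_i^*|^p$, which turns the defining supremum into a linear extremal problem for $\phi$. First I would record that $g_\phi$ is nonnegative and positively homogeneous: for $\lambda>0$ we have $|\lambda x^*|^p=\lambda^p|x^*|^p$, hence $g_\phi(\lambda x^*)=|\lambda^p\phi(|x^*|^p)|^{1/p}=\lambda\, g_\phi(x^*)$. Membership in $H_p[\ell_1(A)]_+$ amounts to finiteness of the norm, which will follow from the formula to be proved (the right-hand side is $\|\phi\|^{1/p}$). The key preliminary remark is that, since $t\mapsto|t|^p$ is convex and $B_{\ell_1(A)}$ has extreme points $\pm e_a$ ($a\in A$), the constraint $\sup_{x\in B_{\ell_1(A)}}\sum_i|x_i^*(x)|^p\le1$ in \eqref{eq: norm fbp} is equivalent to $\sup_{a\in A}\sum_i|x_i^*(a)|^p\le1$, i.e. to $\sum_i|x_i^*|^p\le1$ pointwise (exactly as used in Lemma \ref{lem:PropertiesMaximal}).

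Next I would perform the change of variables. Setting $u_i:=|x_i^*|^p\in(\ell_\infty)_+$, one has $g_\phi(x_i^*)^p=|\phi(u_i)|$, and the constraint becomes $\sum_i u_i\le1$ pointwise; conversely every family $u_i\in(\ell_\infty)_+$ with $\sum_i u_i\le 1$ arises from admissible $x_i^*=u_i^{1/p}$. Since $g_\phi\geq 0$, raising the norm to the power $p$ and using monotonicity of $t\mapsto t^p$, the problem reduces to establishing
$$
\|g_\phi\|_{{FBL}^{(p)}[\ell_1(A)]}^p=\sup\Big\{\sum_{i=1}^m|\phi(u_i)|:\,m\in\mathbb N,\ u_i\in(\ell_\infty)_+,\ \sum_{i=1}^m u_i\le1\Big\}=\sup\big\{|\phi(x^*)|:\,x^*\in B_{\ell_\infty}\big\}.
$$
Once this is done, raising to the power $1/p$ (an increasing operation, which commutes with the supremum) yields precisely the claimed identity $\|g_\phi\|_{{FBL}^{(p)}[\ell_1(A)]}=\sup\{|\phi(x^*)|^{1/p}:x^*\in B_{\ell_\infty}\}$.

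The substance of the argument is the second displayed equality, which I would prove by two inequalities. For the bound ``$\le$'', given $u_i\ge0$ with $\sum_i u_i\le1$, I would choose signs $\varepsilon_i\in\{\pm1\}$ so that $\varepsilon_i\phi(u_i)=|\phi(u_i)|$ and put $w:=\sum_i\varepsilon_i u_i$; then $|w|\le\sum_i u_i\le1$ pointwise, so $w\in B_{\ell_\infty}$ and $\sum_i|\phi(u_i)|=\phi(w)\le\sup_{x^*\in B_{\ell_\infty}}|\phi(x^*)|$. For the reverse bound ``$\ge$'', given $x^*\in B_{\ell_\infty}$ I would test with the two-term family $u_1=(x^*)^+$, $u_2=(x^*)^-$, which satisfies $u_1+u_2=|x^*|\le1$, and use $|\phi(u_1)|+|\phi(u_2)|\ge|\phi(u_1-u_2)|=|\phi(x^*)|$; taking the supremum over $x^*$ completes the equality.

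I do not expect a serious obstacle, as the computation is elementary after the change of variables. The two points that require genuine care are the convexity/extreme-point reduction of the constraint, and the observation that one must recover the supremum over the \emph{full} ball $B_{\ell_\infty}$ rather than merely over the order interval $[0,1]$; it is precisely this that forces the use of two summands (the positive and negative parts of $x^*$) in the lower estimate, a single term being insufficient to see signed functionals.
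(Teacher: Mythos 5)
Your proposal is correct and follows essentially the same route as the paper: the upper bound via the sign choice $\varepsilon_i$ and linearity of $\phi$ applied to $\sum_i\varepsilon_i|x_i^*|^p\in B_{\ell_\infty}$, and the lower bound via testing with the $p$-th roots of $(x^*)_+$ and $(x^*)_-$, are exactly the paper's two steps. The explicit change of variables $u_i=|x_i^*|^p$ and the extreme-point reduction of the constraint are only a cleaner packaging of what the paper does implicitly.
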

\begin{proof}
Clearly, $g_\phi$ is positively homogeneous. Take any $x^*_1,\ldots,x^*_n\in \ell_{\infty}$ such that 
$\sup_{a\in A} \sum_{k=1}^n|x^*_k(a)|^p\leq 1$. For each $k\in \{1,\dots,n\}$, let $\varepsilon_k \in \{-1,1\}$ be the sign of $\phi(|x^*_k|^p)$.
Then $\sum_{k=1}^n  \varepsilon_k|x^*_k|^p \in B_{\ell_{\infty}}$ and so
$$
	\sum_{k=1}^n g_\phi(x^*_k)^p= \sum_{k=1}^n \varepsilon_k \phi(|x^*_k|^p) 
	= \phi\left(\sum_{k=1}^n  \varepsilon_k|x^*_k|^p\right) \leq \sup\big\{|\phi(x^*)|: \, x^*\in B_{\ell_{\infty}}\big\}=:\alpha^p.
$$
This immediately shows that $\|g_\phi\|_{{FBL}^{(p)}[\ell_1(A)]}\leq \alpha$. 

For the converse, pick $x^*\in B_{\ell_{\infty}}$ and write it as $x^* = (x^*)_+ - (x^*)_-$, 
the difference of its positive and negative parts. Since $|((x^*)_+)^\frac{1}{p}(a)|^p + |((x^*)_-)^\frac{1}{p}(a)|^p=|(x^*)_+(a)| + |(x^*)_-(a)|=|x^*(a)|\leq 1$ for all $a\in A$, then we get
\begin{eqnarray*}
|\phi((x^*))| & \leq& |\phi(((x^*)_+)^\frac{1}{p})^p)| + |\phi(((x^*)_-)^\frac{1}{p})^p)| \\
&=& g_\phi(((x^*)_+)^\frac{1}{p})^p + g_\phi(((x^*)_-)^\frac{1}{p})^p \\
&\leq& \|g_\phi\|^p_{{FBL}^{(p)}[\ell_1(A)]}.
\end{eqnarray*}
 
This proves that $\alpha\leq \|g_\phi\|_{{FBL}^{(p)}[\ell_1(A)]}$.
\end{proof}

\begin{lemma}\label{lem:MaximalNecessaryCondition}
Let $A$ be a non-empty set and let $f \in H_p[\ell_1(A)]_+$ be maximal. 
Then there exists $\phi\in\ell_{\infty}$ such that $f=g_\phi$.
\end{lemma}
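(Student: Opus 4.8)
The plan is to manufacture, out of the maximal $f$, a positive linear functional $\phi$ on $\ell_\infty$ for which $g_\phi\geq f$ and $\|g_\phi\|_{{FBL}^{(p)}[\ell_1(A)]}\leq\|f\|_{{FBL}^{(p)}[\ell_1(A)]}$; once this is in place, maximality of $f$ finishes the proof, since $f\leq g_\phi$ forces $\|f\|\leq\|g_\phi\|$ and hence equality of norms, so that $g_\phi$ is an admissible competitor in the definition of maximality.

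First I would transport the content of Lemma~\ref{lem:PropertiesMaximal} to the positive cone. Define $\Phi\colon(\ell_\infty)_+\to\mathbb R_+$ by $\Phi(u)=f(u^{1/p})^p$, where $u^{1/p}$ is the pointwise $p$-th root (which again lies in $(\ell_\infty)_+$). Positive homogeneity of $f$ gives $\Phi(\lambda u)=\lambda\Phi(u)$ for $\lambda\geq0$, and item (ii) of Lemma~\ref{lem:PropertiesMaximal}, applied to $x_1^*=u^{1/p}$ and $x_2^*=v^{1/p}$, yields $\Phi(u)+\Phi(v)\leq\Phi(u+v)$. Thus $\Phi$ is superadditive and positively homogeneous, i.e. a superlinear (concave) functional on the cone, with $\Phi\geq0$. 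Moreover, testing the norm \eqref{eq: norm fbp} with a single vector shows $f(y^*)\leq\|f\|_{{FBL}^{(p)}[\ell_1(A)]}\,\|y^*\|_\infty$, whence $\Phi(u)\leq\|f\|_{{FBL}^{(p)}[\ell_1(A)]}^{\,p}\,\|u\|_\infty$ for every $u\in(\ell_\infty)_+$; that is, $\Phi$ is dominated on the cone by the sublinear functional $q(x):=\|f\|_{{FBL}^{(p)}[\ell_1(A)]}^{\,p}\,\|x\|_\infty$.

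The heart of the argument, and the step I expect to be the main obstacle, is a Hahn--Banach sandwich: since $\Phi$ is superlinear on the convex cone $(\ell_\infty)_+$, $q$ is sublinear on all of $\ell_\infty$, and $\Phi\leq q$ on the cone, there is a linear functional $\phi\colon\ell_\infty\to\mathbb R$ with $\Phi(u)\leq\phi(u)$ for all $u\in(\ell_\infty)_+$ and $\phi(x)\leq q(x)$ for all $x\in\ell_\infty$. This is a Mazur--Orlicz type separation; the only delicate point is that $\Phi$ is defined merely on the cone, which I would handle by forming the sublinear majorant $x\mapsto\inf\{q(x+u)-\Phi(u):u\in(\ell_\infty)_+\}$ and applying the classical Hahn--Banach theorem to it. The functional $\phi$ so obtained is positive, because $\phi\geq\Phi\geq0$ on the cone, and satisfies $\|\phi\|_{\ell_\infty^*}\leq\|f\|_{{FBL}^{(p)}[\ell_1(A)]}^{\,p}$ since $\phi\leq q$.

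Finally I would verify that this $\phi$ does the job. On the one hand, for every $x^*\in\ell_\infty$ one has $g_\phi(x^*)=|\phi(|x^*|^p)|^{1/p}=\phi(|x^*|^p)^{1/p}\geq\Phi(|x^*|^p)^{1/p}=f(|x^*|)\geq f(x^*)$, where the last inequality is item (i) of Lemma~\ref{lem:PropertiesMaximal}; hence $f\leq g_\phi$. On the other hand, Lemma~\ref{lem:normoflinear} together with $\|\phi\|_{\ell_\infty^*}\leq\|f\|_{{FBL}^{(p)}[\ell_1(A)]}^{\,p}$ gives $\|g_\phi\|_{{FBL}^{(p)}[\ell_1(A)]}=\sup_{x^*\in B_{\ell_\infty}}|\phi(x^*)|^{1/p}\leq\|f\|_{{FBL}^{(p)}[\ell_1(A)]}$. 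Combining the two, $\|f\|\leq\|g_\phi\|\leq\|f\|$, so $g_\phi\geq f$ is an element of $H_p[\ell_1(A)]_+$ with the same norm as $f$; maximality of $f$ then forces $g_\phi=f$, as required.
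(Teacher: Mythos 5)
Your proof is correct, and it rests on the same pillars as the paper's: the superadditivity encoded in Lemma~\ref{lem:PropertiesMaximal}(ii), the elementary bound $f(y^*)\le\|f\|_{FBL^{(p)}[\ell_1(A)]}\,\|y^*\|_\infty$, a Hahn--Banach argument in $\ell_\infty$ producing $\phi$, and finally the computation $f\le g_\phi$ with $\|g_\phi\|\le\|f\|$ followed by maximality. The difference lies in which incarnation of Hahn--Banach is invoked. The paper separates the convex super-level set $C=\{x^*\in(\ell_\infty)_+:\,f((x^*)^{1/p})>1\}$ from the open unit ball of $\ell_\infty$ (convexity of $C$ being how item (ii) enters), normalizes $\|\phi\|=1$, and then needs a small limiting argument with $t>1$ to convert the separation inequality into $\phi(|x^*|^p)\ge f(|x^*|)^p$. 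You instead run a Mazur--Orlicz sandwich between the superlinear $\Phi(u)=f(u^{1/p})^p$ on the cone and the sublinear $q=\|f\|^p\,\|\cdot\|_\infty$ on all of $\ell_\infty$; this is marginally cleaner in that positivity of $\phi$ and the bound $\|\phi\|\le\|f\|^p$ come for free from $\Phi\le\phi\le q$, and the pointwise inequality $g_\phi\ge f$ is immediate via Lemma~\ref{lem:PropertiesMaximal}(i), with no limiting step. The one point you flag as delicate --- that $r(x)=\inf\{q(x+u)-\Phi(u):u\in(\ell_\infty)_+\}$ is a finite sublinear majorant to which classical Hahn--Banach applies --- does go through: finiteness follows from $r(x)\ge q(x+u)-q(u)\ge -q(-x)$, and subadditivity of $r$ from superadditivity of $\Phi$ together with the cone being closed under addition. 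So the two arguments are equivalent reformulations of the same separation principle; yours trades the geometric picture for slightly more streamlined functional-analytic bookkeeping.
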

\begin{proof}
The case $f=0$ being trivial, we can suppose without loss of generality that $\|f\|_{{FBL}^{(p)}[\ell_1(A)]} = 1$. 
The set
$$
	C:= \{x^*\in (\ell_{\infty})_+ :\, f((x^*)^{\frac{1}{p}})>1\}
$$ 
is convex as a consequence of Lemma~\ref{lem:PropertiesMaximal}(ii). Let $U$ be the open
unit ball of~$\ell_{\infty}$. Since $\|f\|_\infty = \|f\|_{{FBL}^{(p)}[\ell_1(A)]} = 1$
(Lemma~\ref{lem:PropertiesMaximal}(iii)), we have $C\cap U=\emptyset$.
As an application of the Hahn-Banach separation theorem (cf. \cite[Proposition~2.13(ii)]{fab-ultimo}),
there is $\phi\in (\ell_{\infty})^*$ such that
\begin{equation}\label{eqn:HB}
	\phi(y^*) < \inf \{\phi(x^*) :\,  x^* \in C\}
	\quad\mbox{for all }y^*\in U.
\end{equation}
We can suppose that $\|\phi\|=1$ and so $\|g_\phi\|_{FBL[\ell_1(A)]}=1$ (Lemma~\ref{lem:normoflinear}). 

We claim that $f=g_\phi$. Indeed, since $f$ is maximal, it is enough to show that $f(x^*)\leq g_\phi(x^*)$ for every
$x^*\in \ell_{\infty}$ with $f(x^*)>0$. Fix $t>1$. By Lemma~\ref{lem:PropertiesMaximal}(i), we have
$f(|x^*|)\geq f(x^*)>0$ and so
$$
	f\left(\frac{t}{f(|x^*|)}|x^*|\right) = t > 1.
$$ 
Therefore $\left(\frac{t}{f(|x^*|)}|x^*|\right)^p \in C$ and \eqref{eqn:HB} yields
$$
	\phi\Big(\left(\frac{t}{f(|x^*|)}|x^*|\right)^p \Big)\geq \sup \{|\phi(y^*)|:\, y^*\in U\}=\|\phi\|=1.
$$
We conclude that $(t\phi(|x^*|))^p \geq f(|x^*|)^p $ for any $t>1$, so $(\phi(|x^*|)^p \geq f^p(|x^*|) \geq f(x^*)^p$ and then $g_\phi(|x^*|)=(\phi(|x^*|^p))^{\frac1p} \geq f(x^*)$.
The proof is complete.
\end{proof}

Given any non-empty set~$A$, it is well-known that every $\phi\in \ell_{\infty}^*$
can be written in a unique way as $\phi = \phi_0 + \phi_1$, where 
\begin{itemize}
\item $\phi_0 \in \ell_1(A)$ (identified as a subspace of~$\ell_{\infty}^*$), 
\item $\phi_1 \in \ell_{\infty}^*$ vanishes on all finitely supported elements of~$\ell_{\infty}$.
\end{itemize}
Moreover, $\|\phi\| = \|\phi_0\| + \|\phi_1\|$. 

\begin{lemma}\label{lem:CountableSum}
Let $A$ be a non-empty set and $\phi\in \ell_1(A)$. Then $g_\phi \in FBL^p[\ell_1(A)]$.
\end{lemma}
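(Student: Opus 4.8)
The plan is to prove the statement first for finitely supported $\phi$ and then pass to a general $\phi\in\ell_1(A)$ by a norm-approximation argument, exploiting that by definition $FBL^{(p)}[\ell_1(A)]$ is a \emph{closed} sublattice of $H_p[\ell_1(A)]$. Throughout I will use that the $FBL^{(p)}$-norm dominates pointwise evaluation on $B_{\ell_\infty}$: taking $m=1$ in \eqref{eq: norm fbp} gives $\|h\|_{FBL^{(p)}[\ell_1(A)]}\ge |h(x^*)|$ for $x^*\in B_{\ell_\infty}$, and that \eqref{eq: norm fbp} makes $\|\cdot\|_{FBL^{(p)}[\ell_1(A)]}$ a monotone lattice norm on all of $H_p[\ell_1(A)]$.

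Suppose first that $\phi=\sum_{a\in F}\phi_a e_a$ is supported on a finite set $F\subset A$. Identifying $\phi$ with the functional $x^*\mapsto \sum_{a\in F}\phi_a x^*(a)$ on $\ell_\infty$, we have $g_\phi(x^*)=\big|\sum_{a\in F}\phi_a|x^*(a)|^p\big|^{1/p}$. Consider $\Psi:\mathbb R^F\to\mathbb R$ given by $\Psi(t)=\big|\sum_{a\in F}\phi_a|t_a|^p\big|^{1/p}$, which is continuous and positively homogeneous of degree one. Applying the homogeneous (Krivine) functional calculus to the generators $\delta_{e_a}\in FBL^{(p)}[\ell_1(A)]$ (here $e_a$ denotes the unit vectors of $\ell_1(A)$, so that $\delta_{e_a}(x^*)=x^*(a)$) produces an element $\Psi((\delta_{e_a})_{a\in F})$ lying in the closed sublattice generated by $\{\delta_{e_a}:a\in F\}$, hence in $FBL^{(p)}[\ell_1(A)]$. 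Because the norm dominates pointwise evaluation, this calculus is computed pointwise, so $\Psi((\delta_{e_a})_{a\in F})(x^*)=\Psi\big((x^*(a))_{a\in F}\big)=g_\phi(x^*)$. Thus $g_\phi\in FBL^{(p)}[\ell_1(A)]$ for finitely supported $\phi$. (Concretely, $\Psi$ is a uniform limit on the unit sphere of lattice-linear expressions in the coordinates, which by $p$-convexity turns into $FBL^{(p)}$-norm convergence of the corresponding finite lattice expressions in the $\delta_{e_a}$.)

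Now let $\phi\in\ell_1(A)$ be arbitrary; its support is countable, so choose finite sets $F_n\uparrow\operatorname{supp}(\phi)$ and let $\phi_n$ be the restriction of $\phi$ to $F_n$, so $\|\phi-\phi_n\|_{\ell_1}\to0$. Each $g_{\phi_n}$ lies in $FBL^{(p)}[\ell_1(A)]$ by the previous step. The key pointwise estimate is
$$
|g_\phi(x^*)-g_{\phi_n}(x^*)|\le \bigl|(\phi-\phi_n)(|x^*|^p)\bigr|^{1/p}=g_{\phi-\phi_n}(x^*),
$$
which follows from $\bigl||a|^{1/p}-|b|^{1/p}\bigr|\le|a-b|^{1/p}$ (valid for $p\ge1$) applied with $a=\phi(|x^*|^p)$ and $b=\phi_n(|x^*|^p)$. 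Since $g_{\phi-\phi_n}\in H_p[\ell_1(A)]$ by Lemma~\ref{lem:normoflinear} and the norm is monotone with respect to pointwise domination of absolute values, we get
$$
\|g_\phi-g_{\phi_n}\|_{FBL^{(p)}[\ell_1(A)]}\le \|g_{\phi-\phi_n}\|_{FBL^{(p)}[\ell_1(A)]}=\|\phi-\phi_n\|_{\ell_1}^{1/p},
$$
where the last equality is Lemma~\ref{lem:normoflinear} together with $\|\psi\|_{\ell_\infty^*}=\|\psi\|_{\ell_1}$ for $\psi\in\ell_1(A)$. The right-hand side tends to $0$, so $g_{\phi_n}\to g_\phi$ in norm, and closedness of $FBL^{(p)}[\ell_1(A)]$ yields $g_\phi\in FBL^{(p)}[\ell_1(A)]$.

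The routine half is the approximation step: the Hölder-type inequality and the norm formula of Lemma~\ref{lem:normoflinear} make it essentially automatic once the finitely supported case is in hand. The genuine content, and the step I expect to be the main obstacle, is precisely that finitely supported case, namely justifying that the positively homogeneous expression $\Psi$ in the generators $\delta_{e_a}$ lands in the closed sublattice and is computed pointwise. One must either invoke the homogeneous functional calculus in the $p$-convex lattice $FBL^{(p)}[\ell_1(A)]$ or, equivalently, exhibit $\Psi$ as a norm-limit of honest finite lattice expressions of the form $\bigvee_j\sum_{a}c^j_a|\delta_{e_a}|$ (obtained from a Hölder/duality description of the weighted $\ell_p$-type norm for $\phi\ge0$, with the general signed case reduced via $g_\phi=\bigl|g_{\phi^+}^{\,p}-g_{\phi^-}^{\,p}\bigr|^{1/p}$), and then confirm through the pointwise-domination property of the norm that the resulting limit agrees with $g_\phi$ as a function on $\ell_\infty$.
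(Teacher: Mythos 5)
Your proof is correct and follows essentially the same route as the paper: truncate $\phi$ to finite supports $S$, note that the truncations $g_{\phi_S}$ (the paper's $f_S=\bigl|\sum_{a\in S}\phi(a)|\delta_a|^p\bigr|^{1/p}$) belong to $FBL^{(p)}[\ell_1(A)]$, and control $\|g_\phi-g_{\phi_S}\|_{FBL^{(p)}[\ell_1(A)]}$ by $\bigl(\sum_{a\notin S}|\phi(a)|\bigr)^{1/p}$ via the inequality $\bigl||a|^{1/p}-|b|^{1/p}\bigr|\le|a-b|^{1/p}$, concluding by closedness. The only difference is that you explicitly justify membership of the finite truncations through the positively homogeneous (Krivine) functional calculus commuting with the point-evaluation lattice homomorphisms, a step the paper simply asserts.
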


\begin{proof} Let $(e_a)_{a\in A}$ be the unit vector basis of~$\ell_1(A)$. Set $B=supp(\phi)$ and note that for $x^*\in\ell_\infty(A)$
$$g_\phi(x^*)=\Big|\sum_{a\in B}\phi(a)|x^*(a)|^p\Big|^{\frac1p}.$$
For every finite subset $S\subset B$, we have that 
$$
f_{S}:=\Big|\sum_{a\in S} \phi(a)|\delta_a|^p\Big|^{\frac1p}\in FBL^p[\ell_1(A)].
$$

Let $x_1^*,\ldots,x_n^*\subset \ell_\infty$ such that $\sup_{a\in A} \sum_{j=1}^n |x_j^*(a)|^p\leq1$. We have that
\begin{eqnarray*}
\sum_{j=1}^n |g_{\phi}(x_j^*)-f_{S}(x_j^*)|^p&=&\sum_{j=1}^n\Big|\Big|\sum_{a\in B}\phi(a)|x_j^*(a)|^p\Big|^{\frac1p}- \Big|\sum_{a\in S} \phi(a)|x_j^*(a)|^p \Big|^{\frac1p} \Big|^p\\
&\leq& \sum_{j=1}^n\Big|\sum_{a\in B\backslash S}\phi(a)|x_j^*(a)|^p\Big|\\
&\leq& \sum_{a\in B\backslash S}|\phi(a)|.
\end{eqnarray*}
Hence, $\|g_\phi - f_{S}\|_{FBL^{(p)}[\ell_1(A)]}\leq \Big(\sum_{a\in B\backslash S}|\phi(a)|\Big)^{\frac1p}$. Since $\phi\in\ell_1(A)$ for $\varepsilon >0$ we can take finite $S\subset B$ such that $\sum_{a\in B\backslash S} |\phi(a)|<\varepsilon$. Therefore, we have $g_\phi\in FBL^p[\ell_1(A)]$.\end{proof}

\begin{lemma}\label{lem:continuity}
Let $A$ be a non-empty set, $\xi: B_{\ell_{\infty}} \to \mathbb{R}$ a $w^*$-continuous function and $\phi\in \ell_{\infty}^*$. 
If $\xi \leq g_\phi$ on~$B_{\ell_{\infty}}$, then $\xi \leq g_{\phi_0}$ on~$B_{\ell_{\infty}}$ as well.
\end{lemma}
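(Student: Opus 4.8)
The plan is to exploit the $w^*$-continuity of $\xi$ in order to transfer the inequality $\xi\le g_\phi$ from \emph{finitely supported} elements, where the singular part $\phi_1$ contributes nothing, to an arbitrary $x^*\in B_{\ell_\infty}$. First I would fix $x^*\in B_{\ell_\infty}$ and, for each finite subset $F\subset A$, introduce the truncation $y_F^*:=x^*\chi_F$, which satisfies $\|y_F^*\|_\infty\le\|x^*\|_\infty\le1$, so $y_F^*\in B_{\ell_\infty}$. Ordering the finite subsets of $A$ by inclusion turns $(y_F^*)_F$ into a net. The key observation is that $|y_F^*|^p$ is finitely supported, so $\phi_1(|y_F^*|^p)=0$; consequently $g_\phi(y_F^*)^p=|\phi(|y_F^*|^p)|=|\phi_0(|y_F^*|^p)|=g_{\phi_0}(y_F^*)^p$, i.e. $g_\phi(y_F^*)=g_{\phi_0}(y_F^*)$ for every finite $F$.

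Next I would establish two convergences along this net. On the one hand, for every $h\in\ell_1(A)$ the tail estimate $|\langle y_F^*-x^*,h\rangle|\le\|x^*\|_\infty\sum_{a\notin F}|h(a)|\to0$ (as $F\uparrow A$, using $h\in\ell_1(A)$) shows that $y_F^*\to x^*$ in the $w^*$-topology, whence the $w^*$-continuity of $\xi$ on $B_{\ell_\infty}$ gives $\xi(y_F^*)\to\xi(x^*)$. On the other hand, since $\phi_0\in\ell_1(A)$ and $|x^*(a)|^p\le1$, the sum $\sum_{a\in A}\phi_0(a)|x^*(a)|^p$ converges absolutely, so the partial sums $\phi_0(|y_F^*|^p)=\sum_{a\in F}\phi_0(a)|x^*(a)|^p$ converge to $\phi_0(|x^*|^p)$, and therefore $g_{\phi_0}(y_F^*)\to g_{\phi_0}(x^*)$.

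Finally I would combine the pieces: from $\xi(y_F^*)\le g_\phi(y_F^*)=g_{\phi_0}(y_F^*)$ and passing to the limit along the net, I obtain $\xi(x^*)\le g_{\phi_0}(x^*)$. Since $x^*\in B_{\ell_\infty}$ was arbitrary, this yields $\xi\le g_{\phi_0}$ on $B_{\ell_\infty}$, as required.

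The argument is essentially routine once the correct net is identified; the single idea on which everything rests is that truncating $x^*$ to a finite support annihilates the singular functional $\phi_1$, while $w^*$-continuity ensures that $\xi$ does not detect this truncation in the limit. The only point demanding real care is to work with the net of finite subsets ordered by inclusion rather than a sequence, since $B_{\ell_\infty}$ need not be $w^*$-metrizable when $A$ is uncountable; this is exactly what makes the two limiting steps above valid in full generality.
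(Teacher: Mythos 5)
Your proof is correct and follows essentially the same route as the paper's: both arguments rest on the facts that $\phi_1$ annihilates finitely supported elements and that $\xi$ is $w^*$-continuous, so the inequality passes from the ($w^*$-dense) finitely supported elements to all of $B_{\ell_\infty}$. The only difference is presentational: the paper invokes $w^*$-continuity of $g_{\phi_0}$ on $B_{\ell_\infty}$ together with density in the abstract, whereas you make the approximating net of truncations $x^*\chi_F$ explicit and verify both limits ($\xi(y_F^*)\to\xi(x^*)$ and $g_{\phi_0}(y_F^*)\to g_{\phi_0}(x^*)$ via absolute convergence of the $\ell_1$ sum) by hand, which is a correct and slightly more self-contained rendering of the same idea.
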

\begin{proof} The $w^*$-topology on~$B_{\ell_{\infty}}=[-1,1]^A$ agrees with the pointwise topology.
Since the map $x^* \mapsto |x^*|^p$ is $w^*$-continuous when restricted to~$B_{\ell_{\infty}}$ and $\phi_0$ is $w^*$-continuous, 
we have that $g_{\phi_0}$ is $w^*$-continuous on~$B_{\ell_{\infty}}$.
On the other hand, if $x^*\in B_{\ell_{\infty}}$ is finitely supported, then $\phi_1(|x^*|^p) = 0$ and, therefore, we have $\xi(x^*) \leq g_{\phi}(x^*) 
= |\phi_0(|x^*|^p)|^{\frac{1}{p}}=g_{\phi_0}(x^*)$. 
Since the finitely supported elements of~$B_{\ell_{\infty}}$ are $w^*$-dense and the functions $\xi$ and $g_{\phi_0}$ are $w^*$-continuous on~$B_{\ell_{\infty}}$, 
we conclude that $\xi \leq g_{\phi_0}$ on~$B_{\ell_{\infty}}$.
\end{proof}

We arrive at the main result of this section:

\begin{theorem}\label{thm:Nakano}
The norm of $FBL^p[\ell_1(A)]$ has the strong Nakano property for any non-empty set~$A$.
\end{theorem}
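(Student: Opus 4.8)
The plan is to follow the strategy of \cite{ART}, using the structural lemmas above to reduce an arbitrary directed set to a single, explicitly described upper bound built from a positive element of $\ell_1(A)$. First I would fix an upwards directed norm-bounded set $\mathcal D\subset FBL^{(p)}[\ell_1(A)]_+$ and put $M:=\sup\{\|f\|_{FBL^{(p)}[\ell_1(A)]}:f\in\mathcal D\}$. Taking a single functional in the defining supremum of the norm shows $\|f\|_\infty\le\|f\|_{FBL^{(p)}[\ell_1(A)]}$, so $\mathcal D$ is pointwise bounded and the pointwise supremum $g(x^*):=\sup\{f(x^*):f\in\mathcal D\}$ is well defined. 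By Lemma~\ref{lem:PointwiseSupremum}, $g\in H_p[\ell_1(A)]_+$ with $\|g\|_{FBL^{(p)}[\ell_1(A)]}=M$. The immediate difficulty is that $g$ need only lie in the ambient space $H_p[\ell_1(A)]$, not in the sublattice $FBL^{(p)}[\ell_1(A)]$, so $g$ itself is not yet a legitimate upper bound in the free lattice.

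Next I would pass to a maximal majorant. By Lemma~\ref{lem:Zorn} there is a maximal $\tilde g\in H_p[\ell_1(A)]_+$ with $g\le\tilde g$ and $\|\tilde g\|_{FBL^{(p)}[\ell_1(A)]}=M$, and by Lemma~\ref{lem:MaximalNecessaryCondition} this maximal function has the form $\tilde g=g_\phi$ for some $\phi\in\ell_\infty^*$. Writing $\phi=\phi_0+\phi_1$ with $\phi_0\in\ell_1(A)$ and $\phi_1$ vanishing on finitely supported vectors, my candidate upper bound is $b:=g_{\phi_0}$. Lemma~\ref{lem:CountableSum} guarantees $b\in FBL^{(p)}[\ell_1(A)]$, so $b$ is a genuine element of the free lattice. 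For the norm, Lemma~\ref{lem:normoflinear} gives $\|g_{\phi_0}\|_{FBL^{(p)}[\ell_1(A)]}=\|\phi_0\|^{1/p}$ and $\|g_\phi\|_{FBL^{(p)}[\ell_1(A)]}=\|\phi\|^{1/p}$; since $\|\phi\|=\|\phi_0\|+\|\phi_1\|$, this yields $\|b\|_{FBL^{(p)}[\ell_1(A)]}\le\|g_\phi\|_{FBL^{(p)}[\ell_1(A)]}=M$.

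The crux is to show that $b=g_{\phi_0}$ is an upper bound for $\mathcal D$, which will simultaneously force $\|b\|=M$. Here I would use that every element of $FBL^{(p)}[\ell_1(A)]$ is $w^*$-continuous on $B_{\ell_\infty}$, being a uniform limit of lattice expressions in the evaluation functionals $\delta_a$, each of which is $w^*$-continuous on the ball. Thus each $f\in\mathcal D$ is $w^*$-continuous on $B_{\ell_\infty}$ and satisfies $f\le g\le g_\phi$ there; Lemma~\ref{lem:continuity} then upgrades this to $f\le g_{\phi_0}$ on $B_{\ell_\infty}$, and positive homogeneity extends the inequality to all of $\ell_\infty$. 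Hence $b$ dominates every $f\in\mathcal D$, so $g\le b$ pointwise, and monotonicity of the $FBL^{(p)}$-norm gives $\|b\|\ge\|g\|=M$. Combining this with the reverse estimate from the previous paragraph yields $\|b\|=M$, completing the verification of the strong Nakano property.

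I expect the main obstacle to be precisely this last step: discarding the singular part $\phi_1$ via the $w^*$-density of finitely supported vectors. This is what pulls the abstract maximal majorant $g_\phi\in H_p[\ell_1(A)]$ back into the free lattice $FBL^{(p)}[\ell_1(A)]$ while simultaneously preserving both domination over $\mathcal D$ and the exact norm value $M$; the $w^*$-continuity of the members of $\mathcal D$ is the hypothesis that makes Lemma~\ref{lem:continuity} applicable and is the technical heart of the argument.
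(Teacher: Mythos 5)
Your proposal is correct and follows essentially the same route as the paper's proof: pointwise supremum via Lemma~\ref{lem:PointwiseSupremum}, maximal majorant $g_\phi$ via Lemmas~\ref{lem:Zorn} and~\ref{lem:MaximalNecessaryCondition}, then discarding the singular part $\phi_1$ through the $w^*$-continuity argument of Lemma~\ref{lem:continuity} and landing in $FBL^{(p)}[\ell_1(A)]$ via Lemma~\ref{lem:CountableSum}. The only (harmless) difference is that you spell out the lower bound $\|g_{\phi_0}\|\geq M$ explicitly, which the paper leaves implicit.
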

\begin{proof}
Let $ A \subset FBL^p[\ell_1(A)]_+$ be an upwards directed family such that 
$$
	\sup\{\|f\|_{FBL^p[\ell_1(A)]} : \, f\in A\} = 1.
$$ 
We are going to show that $A$ has an upper bound of norm 1. 

Note that $A$ is pointwise bounded (because $
	\|f\|_\infty:=\{|f(x^*)|: \, x^*\in B_{E^*}\big\} \leq \|f\|_{{FBL}^{(p)}[E]}$) and let $h:\ell_{\infty}\to \mathbb{R}_+$ be defined
as $h(x^*):=\sup\{f(x^*):f\in  A\}$ for all $x^*\in \ell_{\infty}$. 
Lemma~\ref{lem:PointwiseSupremum} ensures that $h\in H_p[\ell_1(A)]_+$ and $\|h\|_{FBL^p[\ell_1(A)]} = 1$. 
Now let $g\in H_p[\ell_1(A)]_+$ be maximal such that 
$g\geq h$  and $\|g\|_{FBL^p[\ell_1(A)]}=1$ (apply Lemma~\ref{lem:Zorn}).
Then $g=g_\phi$ for some $\phi \in \ell_{\infty}$ with $\|\phi\|=1$
(combine Lemmas~\ref{lem:normoflinear} and~\ref{lem:MaximalNecessaryCondition}).

Given any $f\in  A \subset FBL^p[\ell_1(A)]$, we have $f\leq g_\phi$ and the restriction $f|_{B_{\ell_{\infty}}}$ is $w^*$-continuous 
(cf. \cite[Remark 2.5]{OTTT}), hence Lemma~\ref{lem:continuity} yields $f(x^*)\leq g_{\phi_0}(x^*)$ for every $x^*\in \ell_{\infty}$ (bear in mind that
both $f$ and $g_{\phi_0}$ are positively homogeneous). Since $g_{\phi_0} \in FBL^p[\ell_1(A)]$
(Lemma~\ref{lem:CountableSum}) and 
$$1
=\|\phi\| \geq \|\phi_0\|\geq\|g_{\phi_0}\|^p_{FBL[\ell_1(A)]}
$$
(by Lemma~\ref{lem:normoflinear}), it turns out that $g_{\phi_0}$ is the upper bound of~$A$ in $FBL^p[\ell_1(A)]$ that we were looking for. 
The proof is finished.
\end{proof}

\section*{Acknowledgements}

This research has been funded by CSIC i-COOP program under grant COOPB20617. Research of P. Tradacete partially supported by grants PID2020-116398GB-I00 and CEX2019-000904-S funded by MCIN/AEI/10.13039/501100011033, as well as by a 2022 Leonardo Grant for Researchers and Cultural Creators, BBVA Foundation.

\end{document}